\definecolor{refkey}{rgb}{0,0,1}
\definecolor{labelkey}{rgb}{1,0,0}
\definecolor{darkblue}{rgb}{0.0, 0.0, 0.55}
\definecolor{darkcerulean}{rgb}{0.03, 0.27, 0.49}
\definecolor{darkpowderblue}{rgb}{0.0, 0.2, 0.6}
\definecolor{britishracinggreen}{rgb}{0.0, 0.26, 0.15}
\newenvironment{blu}{\color{darkpowderblue}}{}
\newenvironment{mgg}{\color{magenta}}{}
\newenvironment{brg}{\color{britishracinggreen}}{}
\newenvironment{red}{\color{red}}{}
\newcommand{\bre}{\begin{red}}
	\newcommand{\ere}{\end{red}}
\newcommand{\bas}{\begin{brg}}
	\newcommand{\eas}{\end{brg}}
\newcommand{\bblu}{\begin{blu}}
	\newcommand{\eblu}{\end{blu}}
\newcommand{\bmag}{\begin{mgg}}
	\newcommand{\emag}{\end{mgg}}
\newcommand{\xch}[2]{#1}
\newcommand{\reftext}[1]{#1}
\NewDocumentCommand{\colorrule}{O{.4pt}m}{{\color{#2}\hrule height#1}\vspace{4mm}}
\newtheorem{thm}{Theorem}[section]
\newtheorem{prop}[thm]{Proposition}
\newtheorem{conj}[thm]{Conjecture}
\newtheorem{lem}[thm]{Lemma}
\newtheorem{coro}[thm]{Corollary}
\theoremstyle{definition}
\newtheorem{defn}[thm]{Definition}
\newtheorem{rem}[thm]{Remark}
\newtheorem{Example}[thm]{Example}
\numberwithin{equation}{section}
\def\beq{\begin{equation}} 
	\def\eeq{\end{equation}}
\title{Spectral Metric and Einstein Functionals} 
\author[L.\ D\k{a}browski]{Ludwik D\k{a}browski${}^{(1)}$}
\address{${}^{(1)}$ SISSA (Scuola Internazionale Superiore di Studi Avanzati), \newline\indent Via Bonomea 265, 34136 Trieste, Italy} 
\email{dabrow@sissa.it} 
\author[A.\ Sitarz]{Andrzej Sitarz${}^{(2)}$}
\author[P.\ Zalecki]{Pawe\l{} Zalecki${}^{(2)}$}
\thanks{This work is supported by the Polish National Science Centre grant 2020/37/B/ST1/01540}
\address{${}^{(2)}$ Institute of Theoretical Physics, Jagiellonian University, \newline\indent
	prof.\ Stanis\l awa \L ojasiewicza 11, 30-348 Krak\'ow, Poland.}
\email{andrzej.sitarz@uj.edu.pl}  
\email{pawel.zalecki@doctoral.uj.edu.pl}
\date{}
\begin{document}
	\maketitle
	\begin{abstract}
		We define bilinear functionals of vector fields and differential forms, the densities of
		which yield the metric and Einstein tensors on even-dimensional Riemannian manifolds.
		We generalise these concepts in non-commutative geometry and, in particular, we prove that for 
		the conformally rescaled geometry of the noncommutative two-torus the Einstein functional 
		vanishes.  
	\end{abstract}
	%%%%%%%%%%%%%%%%%%%%%%%%%%%%%%%%%%%%%%%%%%%%%%%%%%%%%%%%%%%%%%%%
	% !TeX spellcheck = en_GB
	\section{Introduction}
	\label{sec1}
	
	Riemannian geometry flourishing for more than hundred and fifty years is
	continually at the core of modern mathematics with a wealth of applications
	and open problems. Its role as a key to understanding general relativity
	makes it ubiquitous in modern physics. The study of interconnection between
	the geometric objects on the manifolds and differential operators led to
	the development of spectral geometry and, with the seminal question \textit{``Can
		one hear the shape of a drum?''}, was popularised by Mark Kac
	\cite{Ka66}. A further generalization was brought by noncommutative geometry
	\cite{Co94}, where spectral methods were proposed to study invariants of
	algebras, functionals on noncommutative algebras and geometric objects
	that extends the notion of curvature, leading, for example, to the version
	of Gauss-Bonnet theorem for noncommutative tori.
	
	The aim of this paper is to propose a few new functionals that express
	other basic geometric objects, in particular the Einstein tensor, using
	the methods of the Wodzicki residue and its generalisation in noncommutative
	geometry.
	
	%s1.1 #&#
	\subsection{An overview of the field}
	\label{sec1.1}
	
	An eminent spectral scheme that generates geometric objects on manifolds
	such as volume, scalar curvature, and other scalar combinations of curvature
	tensors and their derivatives \textit{prima facie} is the small-time asymptotic
	expansion of the (localised) trace of heat kernel \cite{Gi84,Gi04}. This
	has many diverse applications in mathematics and in physics, mostly in
	the context of general relativity and its generalisations.
	
	Using the Mellin transform, the coefficients of this expansion can be transmuted
	into certain values or residues of the (localised) zeta function of the
	Laplacian. In turn, they can be expressed using the Wodzicki residue
	$\mathcal{W}$ (also known as noncommutative residue), which is a unique
	(up to multiplication by a constant) tracial state on the algebra of pseudo-differential
	operators ($\Psi $DO) on a complex vector bundle $E$ over a compact manifold
	$M$ of dimension $n\geq 2$ \cite{Gu85,Wo87}. For the oriented manifold
	$M$ it is given up to multiplicative constant by a simple integral formula,
	%
	%e1.1 #&#
	\begin{equation}
		\mathcal{W}\,(P) := \int _{M} \left ( \int _{|\xi |=1} tr\, \sigma _{-n}(P)(x,
		\xi )~ {\mathcal V}_{\xi}\right )~ d^{n} x,
		\label{eq1.1}
	\end{equation}
	where $tr$ is the trace over endomorphisms of the bundle $E$ at any given
	point of $M$, $\sigma _{-n}(P)$ is the symbol of order $-n$ of a pseudodifferential
	operator $P$ and ${\mathcal V}_{\xi}$ denotes the volume form on the unit
	sphere. The name `residue' comes from the fact that it is indeed a residue
	(at $z\!=\!1$) of the $\zeta $-function associated to $P$
	\cite{G-BVF01}. In this paper, for simplicity, we focus only on closed
	oriented $M$ of even dimension $n=2m$. In this case, for a Riemannian manifold
	$M$ equipped with a metric tensor $g$ and the (scalar) Laplacian
	$\Delta $ one has,
	%
	%e1.2 #&#
	\begin{equation}
		\label{WresL}
		\mathcal{W}\,( \Delta ^{-m})= v_{n-1} \, vol(M),
		%%LEAP%%%\label{eq1.2}
	\end{equation}
	and a \textit{localized} form, a functional of $f \in C^{\infty}(M)$,
	%
	%e1.3 #&#
	\begin{equation}
		\label{WresfL}
		{\mathcal{v}}(f) := \mathcal{W}\,(f \Delta ^{-m})= v_{n-1} \int _{M} f~vol_{g},
		%%LEAP%%%\label{eq1.3}
	\end{equation}
	where
	\begin{equation*}
		v_{n-1}:=vol(S^{n-1}) = \frac{2\pi ^{m}}{\Gamma (m)},
	\end{equation*}
	is the volume of the unit sphere $S^{n-1}$ in $\mathbb{R}^{n}$.
	
	A startling result regarding a higher power of the Laplacian was divulged
	by Connes in the early 1990s \cite{Co96} and explicitly confirmed independently
	in \cite{Ka95} and in \cite{KaWa95}. Namely, for $n>2$,
	%
	%e1.4 #&#
	\begin{equation}
		\label{WresLL}
		\mathcal{W}\,( \Delta ^{-m+1})= \frac{n-2}{12} v_{n-1} \int _{M} R(g)~vol_{g},
		%%LEAP%%%\label{eq1.4}
	\end{equation}
	is up to a constant a Riemannian analogue of the Einstein-Hilbert action
	functional of general relativity in \xch{vacuum}{vaccum}. Here $R=R(g)$ is the scalar
	curvature, that is the $g$-trace $R\!=\!g^{jk}{Ric}_{jk}$ of the Ricci tensor
	with components ${Ric}_{jk}$ in local coordinates, where $g^{jk}$ are the raised components of the metric $g$.
	
	A localised form of \reftext{\eqref{WresLL}} for $n>2$ is a functional on
	$C^{\infty}(M)$ (\xch{cf.}{c.f.} \cite{CM08})
	%
	%e1.5 #&#
	\begin{equation}
		\label{WresfLL}
		{\mathcal R}(f) := \mathcal{W}\,(f \Delta ^{-m+1})= \frac{n-2}{12} v_{n-1}
		\int _{M} f R(g) vol_{g} .
		%%LEAP%%%\label{eq1.5}
	\end{equation}
	For the Riemannian spin manifolds this can be expressed using such operators
	in spinor bundle as the spin Laplacian or the Dirac operator which are
	related by the Schr\"{o}dinger-Lichnerowicz formula. In particular, for the
	Dirac operator one has:
	%
	%e1.6 #&#
	\begin{equation}
		\label{WresfD}
		\begin{aligned}
			\mathcal{W}\,(f|D|^{-n})= 2^{m} v_{n-1} \int _{M} f~vol_{g},
			\\
			\mathcal{W}\,(f |D|^{-n+2})= -2^{m} \frac{n-2}{24} v_{n-1} \int _{M}
			f R(g)~vol_{g} .
		\end{aligned}
		%
		%%LEAP%%%\label{eq1.6}
	\end{equation}
	
	In the noncommutative realm the spectral-theoretic approach to scalar curvature
	has been extended also to quantum tori in the seminal work of Connes and
	Tretkoff \cite{CoTr11}, expanded in \cite{CoMo14} and then extensively studied by
	many authors
	\cite{FK12,FK13,FK15,Fa15,FGK16,LM16,IM18,Li18,SZ19,Li20,Po20b}, see also
	\cite{Co19,FK20,LM20} for recent surveys. Therein, the pseudodifferential
	operators and symbol calculus introduced in \cite{Co80} and extended to
	crossed product algebras in \cite{Ba88a,Ba88b}, cf. also
	\cite{Ta18,HaLePo19a,HaLePo19b,Go-PeJuPa17} for detailed account, have
	been employed for computations of certain values and residues of
	$\zeta $-functions of suitable Laplace type operators.
	
	In these papers the analogue of conformal transformations of the standard
	(flat) geometry of the noncommutative 2-torus was worked out, with the
	conformal scaling (Weyl factor) taken as an invertible positive element
	from the underlying algebra $A$; see also \cite{CF19} for a conformally
	non-flat quantum 4-torus. These achievements produced extremely interesting
	modular aspects, which, however, involve complicated, often computer-assisted,
	calculations with thousands of terms.
	
	In this respect, the alternative use of Weyl factor from the copy
	$A^{o}$ of $A$ in the commutant of $A$ in \cite{DaSi13} and the early nonconformally
	flat modification of the torus geometry by an element from $A^{o}$ in
	\cite{DaSi15} allowed stripping the modular aspects and somewhat simplifying
	the computations, while still producing a nontrivial scalar curvature.
	One can also work with $A^{o}$ as the underlying algebra and modify the
	Dirac operator by an element from $A$ instead. This still yields the usual
	spectral triple, while taking the other two combinations yields the twisted
	(modular) triples, so there are just two different types of situation
	\cite{BDS19}.
	
	Now, for quantum tori, and more generally for spectral triples with real
	structure $J$, there is a natural choice for $A^{o}$ as $JAJ$. Moreover,
	since $A^{o}$ and $A$ coincide in the classical (commutative) case, also
	modifying the quantum torus geometry by elements from $A$ or from
	$A^{o}$ appear a viable option and both of them should be considered.
	The latter leads to the usual (untwisted) spectral triple with the algebra
	$A$.
	
	Concerning the spectral approach described above, the Wodzicki residue
	methods have already been used for quantum tori in dimensions n = 2 and
	n = 4 in \cite{FW11} and \cite{FK15}, respectively, and have been extended
	to any dimension $n\geq 2$ in \cite{LNP16}, see also \cite{Po20a}, and
	applied in \cite{Fa15,Po20b} for the scalar curvature on NC tori.
	
	Altogether, the aforementioned spectral methods allowed so far to extract
	the scalar invariants built from the metric and the curvature tensors,
	while the understanding of the curvature tensor itself has not yet been
	achieved. Only recently has the Ricci curvature tensor $\xch{Ric}{R}_{jk}$ been recovered
	using the Hodge-de\,Rham spectral triple, and more precisely from the difference
	of zeta functions of the Laplacian on functions and on one-forms, and applied
	to the noncommutative two-torus \cite{FGK16}.
	
	No doubt it would be extremely interesting to recover other important tensors
	in both the classical setup as well as for the generalised or quantum geometries.
	In this paper we accomplish this task for the metric tensor ${ g}$ itself,
	its dual, and for the Einstein tensor
	\begin{equation*}
		{ G}:= \text{Ric} - \frac{1}{2} R(g)\, {g},
	\end{equation*}
	which directly enters the Einstein field equations with matter, and its
	dual. For this purpose, we employ in fact the Wodzicki residue of a suitable
	power of the Laplace type operator multiplied by a pair of other differential
	operators. Notably, we demonstrate that the Wodzicki residue density recovers
	the tensors ${g}$ and ${G}$ as certain bilinear functionals of vector fields
	on a manifold $M$, while their dual tensors are recovered as the density
	of bilinear functionals of differential one-forms on $M$.
	
	Using Connes' pseudodifferential calculus on noncommutative tori we also
	propose a conspicuous quantum analogue of these functionals and probe it
	on 2 and 4-dimensional noncommutative tori. Note, however, that a direct
	comparison with \cite{FGK16} is arduous not only due to the difference
	between the canonical and Hodge-de\,Rham spectral triples, but more fundamentally,
	due to the unknown way how to extract from the Einstein tensor the Ricci
	tensor due to the ordering ambiguity (\textit{quantum product}) of $g$ and
	$R(g)$.
	
	The aforementioned functionals are built using either the Laplace or Dirac
	operator (in case $M$ is spin manifold), but before embarking on the latter
	one, we need actually first to settle other Laplace-type operators, acting
	on vector bundles of suitable rank over $M$. In fact, it will be advantageous
	to consider a general class of such operators and to control which tensors
	they produce. For this reason, our notation will systematically keep track
	of the dependence on a particular differential operator.
	
	This will be also beneficial for another reason. Namely, one typically
	obtains spectrally the scalar curvature $R$ of a manifold using the Laplace
	or Dirac operators, which are built openly from the Levi-Civita connection
	(so torsion-free). Classically, these operators can be also characterised
	by the fact that the so obtained $R$ minimises the relevant functionals
	(as otherwise there is a non-negative contribution from the torsion). This
	property may be not satisfied in the noncommutative realm, as actually
	there is no implicit notion of torsion (see, however, other frameworks
	in e.g. \cite{fgr99,Ro13,BM20,BGJ21a,BGJ21b}).
	
	It turns out that already on the noncommutative 4-torus the most immediate
	quantum analogue of the conformally rescaled \textit{flat} Laplace operator
	does not minimise the scalar curvature functional \cite{Si14}. Thus, if
	no particular reference Laplace-type operator is declared to be torsion-free,
	it can be convenient to label also the spectrally obtained geometric tensors
	such as $R$, ${\text{Ric}}$ or ${ G}$ by the concrete operator
	$\Delta $ which has been employed for their definition, and call them,
	e.g. ``$\Delta $ scalar curvature'', etc. By doing so, we will be able to
	determine which Laplace-type operators provide the same geometric invariant.
	
	%s1.2 #&#
	\subsection{Organization of the article}
	\label{sec1.2}
	
	We start with a brief overview of the normal coordinates and the expression
	of the Laplace operator and its symbol at a given point on the manifold
	in these coordinates and present the explicit results for the symbols of
	the negative powers of the Laplace operator. Then we prove the main theorems
	showing that functionals on vector fields (understood as differential operators) yield the metric (\reftext{Theorem~\ref{metricthm}}) and the Einstein tensor (\reftext{Theorem~\ref{einsteinthm}}) densities.
	
	In Section~\ref{sec3} we demonstrate in \reftext{Theorems~\ref{thm3.1} and \ref{lifteinstein}} the value of the metric
	and Einstein functionals for Laplace-type operators on vector bundles,
	giving interesting examples of the spinor Laplacian (\reftext{Proposition \ref{prop3.4}}) and
	square of the Dirac operator (\reftext{Proposition \ref{prop3.5}}). In Section~\ref{sec4}\xch{ we}{. we} propose
	functionals on the space of differential one-forms on a spin$_{c}$ manifold
	and prove that these functionals provide densities of the respective dual
	metric and Einstein tensors (\reftext{Theorem \ref{thm4.1}}).
	
	In Section~\ref{sec5} we propose an extension of these functionals to the noncommutative
	realm, focusing in Section~\ref{51} on Laplacian on noncommutative tori,
	where outer derivations are interpreted as vector fields. We prove that
	the spectral Einstein tensor vanishes identically for the conformally rescaled
	geometry of the noncommutative 2-torus (\xch{\reftext{Proposition~\ref{prop5.2}}}{Theorem 5.2}) and provide an explicit
	and compact formula for the spectral Einstein and metric tensors for the
	4-torus. In Section~\ref{52} we propose the generalisation of the spectral
	metric and Einstein functionals on noncommutative differential forms for
	spectral triples on conformally rescaled noncommutative 2 and 4-tori, showing
	that also the contravariant spectral Einstein functional vanishes for the
	conformally rescaled spectral triple on noncommutative 2-torus (\reftext{Proposition~\ref{prop5.10}}) and provide an explicit formula for the spectral metric and Einstein
	functionals for the noncommutative 4-torus. Furthermore, we discuss also
	these functionals for finitely summable and regular spectral triples and
	illustrate how these functionals behave under tensoring a finite summable
	regular spectral triple with the simplest non-trivial finite triple on
	$\mathbb{C}^{2}$.
	
	The Appendix provides a few formulae and computations of pseudodifferential
	symbols, which are used in proofs.
	
	%s1.3 #&#
	\subsection{Notation}
	%%LEAP%%%\label{sec1.3}
	\label{13}
	
	Throughout the article we work with a closed, orientable Riemannian manifold
	of dimension $n=2m$, with a fixed metric $g$, while in the case of functionals
	on differential forms we assume the existence of spin$_{c}$ structure and
	fix the spinor bundle. We denote by $\gamma ^{a}$ the matrices that satisfy
	$\gamma ^{a} \gamma ^{b} + \gamma ^{b} \gamma ^{a}=0$ if $a\not =b$, and
	by $(\gamma ^{a})^{2}=1$ for $a,b=1,\ldots ,n$.
	
	We denote the Einstein tensor on $M$ by $G$. The spectral metric and Einstein
	functionals on vector fields we denote by
	${\mathcal{g}}^{P}, {\mathcal{G}}^{P}$, where $P$ is an appropriate Laplace-type
	operator, whereas for the spectral metric and Einstein functionals on differential
	forms we use the notation ${\mathcal{g}}_{D}$ and
	${\mathcal{G}}_{D}$, where $D$ is a Dirac-type operator.
	
	The proofs are all based on the technique of normal coordinates
	${\mathbf{x}}$ around a fixed point on the manifold and the expansion of all
	geometric objects (metric, orthonormal fames, connection, vector fields,
	etc.) up to the relevant order in ${\mathbf{x}}$.
	
	%s2 #&#
	\section{The metric and the Einstein tensor densities}
	\label{sec2}
	
	We consider an even-dimensional compact Riemannian manifold $M$ with components
	of the metric $g$ given in chosen local coordinates by $g_{ab}$. The Laplace
	operator, which is densely defined on $L^{2}(M,vol_{g})$, is expressed
	as
	%
	%e2.1 #&#
	\begin{equation}
		\Delta = - \frac{1}{\sqrt{\text{det}(g)}} \partial _{a} \bigl( \sqrt{
			\text{det}(g)} g^{ab} \partial _{b} \bigr),
		\label{eq2.1}
	\end{equation}
	where $g^{ab}$ is the inverse of the matrix $g_{ab}$ and we use here and
	in the following the summation convention over repeated indices.
	
	The symbols of the differential operator $\Delta $ are:
	%
	%e2.2 #&#
	\begin{equation}
		\label{LapSym}
		\begin{aligned}
			{\mathfrak a}_{2} =& g^{ab} \xi _{a} \xi _{b}, \qquad {\mathfrak a}_{1}
			= & \frac{-i}{\sqrt{\text{det}(g)}} \partial _{a} \bigl( \sqrt{\text{det}(g)}
			g^{ab} \bigr) \xi _{b}, \qquad {\mathfrak a}_{0}=0.
		\end{aligned}
		%
		%%LEAP%%%\label{eq2.2}
	\end{equation}
	As the next step, we will conveniently express the symbols using centered
	normal coordinates ${\mathbf{x}}$ \cite{MSV99} around a fixed point of
	$M$ with ${\mathbf{x}}=0$.
	
	%s2.1 #&#
	\subsection{Laplace operator and its powers in normal coordinates}
	\label{sec2.1}
	
	Let us recall that in the normal coordinates the metric has a Taylor expansion:
	%
	%e2.3 #&#
	\begin{equation}
		g_{ab} = \delta _{ab} - \frac{1}{3} R_{acbd} x^{c} x^{d} + o({\mathbf{x^{2}}}),
		\label{gNorm}
		%%LEAP%%%\label{eq2.3}
	\end{equation}
	and
	%
	%e2.4 #&#
	\begin{equation}
		\sqrt{\text{det}(g)} = 1 - \frac{1}{6} \mathrm{Ric}_{ab} x^{a} x^{b} + o({
			\mathbf{x^{2}}}),
		\label{volNorm}
		%%LEAP%%%\label{eq2.4}
	\end{equation}
	where $R_{acbd}$ and $\mathrm{Ric}_{ab}$ are the components of the Riemann
	and Ricci tensor, respectively, at the point with ${\mathbf{x}}=0$ and we use
	the notation $o({\mathbf{x^{k}}})$ to denote that we expand a function up to
	the polynomial of order $k$ in the normal coordinates. The inverse metric
	is
	%
	%e2.5 #&#
	\begin{equation}
		g^{ab} = \delta _{ab} + \frac{1}{3} R_{acbd} x^{c} x^{d} + o({\mathbf{x^{2}}}),
		\label{gInverseNorm}
		%%LEAP%%%\label{eq2.5}
	\end{equation}
	where on the right-hand side we still can keep the lower indices
	$a,b$ as at the point with ${\mathbf{x}}=0$ the metric is standard Euclidean
	and so the tensor indices are lowered and raised by the Kronecker symbols
	$\delta _{ab}$ and $\delta ^{ab}$.
	
	Consequently, the symbols of the Laplace operator in normal coordinates
	are
	%
	%e2.6 #&#
	\begin{equation}
		\label{LapSymNorm}
		\begin{aligned}
			{\mathfrak a}_{2} =& \bigl( \delta _{ab} + \frac{1}{3} R_{acbd} x^{c} x^{d}
			\bigr) \xi _{a} \xi _{b} + o({\mathbf{x^{2}}}),
			\\
			{\mathfrak a}_{1} = & \frac{2i}{3} \mathrm{Ric}_{ab} x^{a} \xi _{b} + o({
				\mathbf{x^{2}}}).
		\end{aligned}
		%
		%%LEAP%%%\label{eq2.6}
	\end{equation}
	Then, by a straightforward application of \reftext{(\ref{gInverseNorm})}, \reftext{(\ref{gNorm})}, \reftext{(\ref{volNorm})}
	to \reftext{\eqref{SymPinverse}} one has:
	%
	%l2.1 #&#
	\begin{lem}
		\label{lem2.1}
		In normal coordinates around a fixed point of the manifold $M$ the symbols
		of the inverse of the Laplace operator read
		%
		%e2.7 #&#
		\begin{equation}
			\begin{aligned}
				&\mathfrak b_{2} = ||\xi ||^{-4} \bigl( \delta _{ab} - \frac{1}{3} R_{acbd}
				x^{c} x^{d} \bigr) \xi _{a} \xi _{b} + o(\mathbf{x^{2})},
				\\
				& \mathfrak b_{3} = -\frac{2i}{3} \mathrm{Ric}_{ab} x^{a} \xi _{b} ||
				\xi ||^{-4} + o(\mathbf{x)},
				\\
				&\mathfrak b_{4} = \frac{2}{3} \mathrm{Ric}_{ab} \xi _{a} \xi _{b} ||
				\xi ||^{-6} + o(\mathbf{1).
			}\end{aligned}
			\label{normalny_laplasjan}
			%%LEAP%%%\label{eq2.7}
		\end{equation}
	\end{lem}
	Next, we apply (\reftext{Lemma~\ref{LA1}}) to compute the three leading symbols of
	the powers of the pseudodifferential operator $\Delta ^{-1}$:
	%
	%p2.2 #&#
	\begin{prop}
		\label{prop2.2}
		The first three leading symbols of the operator $\Delta ^{-k}$,
		$k >0$
		\begin{equation*}
			\sigma (\Delta ^{-k})=\mathfrak c_{2k}+\mathfrak c_{2k+1}+\mathfrak c_{2k+2}+
			\ldots ,
		\end{equation*}
		are given up to order respectively $\mathbf{x^{2}},\mathbf{ x},\mathbf{ 1}$ in normal coordinates
		around a fixed point by,
		%
		%e2.8 #&#
		\begin{equation}
			\begin{aligned}
				&\mathfrak c_{2k} = ||\xi ||^{-2k-2} \left ( \delta _{ab} -
				\frac{k}{3} R_{ac bd} x^{c} x^{d} \right ) \xi _{a} \xi _{b} + o(\mathbf{x^{2})},
				\\
				&\mathfrak c_{2k+1}=\frac{-2ki}{3||\xi ||^{2k+2}}\mathrm{Ric}_{ab }x^{b}
				\xi _{a} + o(\mathbf{x)},
				\\
				&\mathfrak c_{2k+2}=\frac{k(k+1)}{3||\xi ||^{2k+4}}\mathrm{Ric}_{ab}
				\xi _{a}\xi _{b} + o(\mathbf{1).
			}\end{aligned}
			\label{laplace_normal_powers}
			%%LEAP%%%\label{eq2.8}
		\end{equation}
	\end{prop}
	\begin{proof}
		We use \reftext{\eqref{normalny_laplasjan}} and \reftext{\eqref{commutative_operator_powers}} keeping only terms with the right order
		in $\mathbf{x}$, which for $\mathfrak c_{2k+2}$ yields only 4 terms instead
		of 10 in \reftext{\eqref{commutative_operator_powers}}, i.e.
		\begin{equation*}
			\mathfrak c_{2k+2}=k (\mathfrak b_{2})^{n-1}\mathfrak b_{4} -i
			\frac{k(k-1)}{2}(\mathfrak b_{2})^{k-2}\partial ^{\xi}_{a}(
			\mathfrak b_{2})\partial _{a}^{x}(\mathfrak b_{3})-
		\end{equation*}
		\begin{equation*}
			-\frac{k(k-1)}{4}(\mathfrak b_{2})^{k-2}\partial ^{\xi}_{a}\partial ^{
				\xi}_{b}(\mathfrak b_{2})\partial ^{x}_{a}\partial ^{x}_{b}(
			\mathfrak b_{2})-
		\end{equation*}
		\begin{equation*}
			-\frac{k(k-1)(k-2)}{6}(\mathfrak b_{2})^{k-3}\partial ^{\xi}_{a}(
			\mathfrak b_{2})\partial ^{\xi}_{b}(\mathfrak b_{2})\partial _{a}^{x}
			\partial _{b}^{x}(\mathfrak b_{2}) + o(\mathbf{1).
		}\end{equation*}
		We simplify it further by taking into account the properties of the Riemann
		tensor,
		\begin{equation*}
			R_{abcd}\xi _{a}\xi _{b}=0=R_{abcd}\xi _{c}\xi _{d},
		\end{equation*}
		leading to the above result.
	\end{proof}
	%
	
	%s2.2 #&#
	\subsection{Spectral functionals of vector fields}
	\label{sec2.2}
	
	Let $V,W$ be a pair of vector fields on a compact Riemannian manifold
	$M$, of dimension $n=2m$. Using the Laplace operator, we define two functionals
	$ \mathcal{g}^{\Delta}(V,W)$ and $ \mathcal{G}^{\Delta}(V,W)$.
	%
	%t2.3 #&#
	\begin{thm}
		%%LEAP%%%\label{thm2.3}
		\label{metricthm}
		The functional:
		\begin{equation*}
			\mathcal{g}^{\Delta}(V,W) := \mathcal{W}\left ( VW \Delta ^{-m-1}
			\right ),
		\end{equation*}
		is a bilinear, symmetric map, whose density is proportional to the metric
		evaluated on the vector fields:
		\begin{equation*}
			{ \mathcal{g} }^{\Delta}(V,W) = -\frac{v_{n-1}}{n} \int _{M} g(V,W)\, vol_{g}.
		\end{equation*}
	\end{thm}
	\begin{proof}
		The product of two vector fields is a differential operator with a symbol
		%
		%e2.9 #&#
		\begin{equation}
			\sigma (VW)=\mathfrak v_{2}+\mathfrak v_{1}=-V^{a}W^{b}\xi _{a}\xi _{b}+iV^{a}
			\delta _{a}(W^{b})\xi _{b}.
			\label{eq2.9}
		\end{equation}
		Then,
		\begin{equation*}
			\mathcal{W}\left ( VW \Delta ^{-m-1} \right ) = \int _{M} \int _{||
				\xi ||=1}\sigma _{-2m}(VW \Delta ^{-m-1})\, vol_{g},
		\end{equation*}
		where by \reftext{\eqref{laplace_normal_powers}}
		\begin{equation*}
			\sigma _{-2m}(VW \Delta ^{-m-1})=-V^{a}W^{b}\xi _{a}\xi _{b}||\xi ||^{-2m-2}.
		\end{equation*}
		Using integration over $S^{2m-1}$ we get
		\begin{equation*}
			\int _{||\xi ||=1}\sigma _{-2m}(VW \Delta ^{-m-1})=-\frac{v_{n-1}}{n}V^{a}W^{a},
		\end{equation*}
		which ends the proof.
	\end{proof}
	%
	%t2.4 #&#
	\begin{thm}%
		%%LEAP%%%\label{thm2.4}
		\label{einsteinthm}
		The functional:
		%
		%e2.10 #&#
		\begin{equation}
			\mathcal{G}^{\Delta}(V,W) := \mathcal{W}\left ( VW \Delta ^{-m}
			\right ),
			\label{eq2.10}
		\end{equation}
		is a bilinear, symmetric map, whose density is proportional to the Einstein
		tensor $G$ evaluated on the two vector fields:
		\begin{equation*}
			\mathcal{G}^{\Delta}(V,W) = {\frac{v_{n-1}}{6}} \int _{M} G(V,W)\, vol_{g}.
		\end{equation*}
	\end{thm}
	\begin{proof}
		Similarly to the previous theorem, we need to compute the symbol of order
		$-2m$ of the pseudodifferential operator $VW\Delta ^{-m}$. Using \reftext{\eqref{composition}} we have
		%
		%e2.11 #&#
		\begin{equation}
			\begin{aligned}
				\sigma _{-2m}(VW\Delta ^{-m}) &= \mathfrak v_{2} \mathfrak c_{2m+2}+
				\mathfrak v_{1}\mathfrak c_{2m+1} -i\partial _{a}\mathfrak v_{2}
				\delta _{a}\mathfrak c_{2m+1}
				\\
				& \quad -i \partial _{a}\mathfrak v_{1}\delta _{a}\mathfrak c_{2m} -
				\frac{1}{2}\partial _{a}\partial _{b}\mathfrak v_{2}\delta _{a}
				\delta _{b}\mathfrak c_{2m}.
			\end{aligned}
			\label{eq2.11}
		\end{equation}

		As the normal coordinates $\mathbf{x}$ are $0$ at the (arbitrary) fixed point of the manifold, we are interested only in terms that
		do not vanish at ${\mathbf{x}}=0$. Since both $\mathfrak c_{2m+1}$ and
		$\delta _{a} \mathfrak c_{2m}$ vanish at ${\mathbf{x}}=0$ we are left only with terms that depend on $\mathfrak v_{2}$. We explicitly compute
		%
		%e2.12 #&#
		\begin{equation}
			\begin{aligned}
				\mathfrak v_{2}\mathfrak c_{2m+2} &=-\frac{m(m+1)}{3||\xi ||^{2m+4}} V^{a}W^{b}
				\mathrm{Ric}_{cd}\xi _{a}\xi _{b}\xi _{c}\xi _{d} + o(\mathbf{1)},
				\\
				-i\partial _{a}\mathfrak v_{2}\delta _{a}\mathfrak c_{2m+1}& =-i(-V^{a}W^{b}
				\xi _{b}-V^{b}W^{a}\xi _{b})\frac{-2mi}{3||\xi ||^{2m+2}}
				\mathrm{Ric}_{ca}\xi _{c} + o(\mathbf{1)
				}\\
				& = \frac{2m}{3||\xi ||^{2m+2}}V^{a}W^{b}\xi _{c}(\xi _{a}
				\mathrm{Ric}_{bc}+\xi _{b}\mathrm{Ric}_{ac})+o(\mathbf{1)},\\
				-\frac{1}{2}\partial _{a}\partial _{b}\mathfrak v_{2}\delta _{a}
				\delta _{b}\mathfrak c_{2m}& = \frac{1}{2}(V^{a}W^{b}+V^{b}W^{a})
				\frac{-m}{3||\xi ||^{2m+2}}(R_{cadb} +R_{cbda}) \xi _{c}\xi _{d} + o(
				\mathbf{1})
				\\
				& =-\frac{2m}{3||\xi ||^{2m+2}}V^{a}W^{b}\xi _{c}R_{cadb}\xi _{d}+o(
				\mathbf{1).
			}\end{aligned}
			\label{eq2.12}
		\end{equation}
		As a result,
		%
		%e2.13 #&#
		\begin{equation}
			\begin{aligned}
				\sigma _{-2m}(VW\Delta ^{-m})&=-\frac{m(m+1)}{3||\xi ||^{2m+4}} V^{a}W^{b}
				\mathrm{Ric}_{cd}\xi _{a}\xi _{b}\xi _{c}\xi _{d}
				\\
				&\quad +\frac{2m}{3||\xi ||^{2m+2}}V^{a}W^{b}\xi _{c}(\xi _{a}
				\mathrm{Ric}_{bc}+\xi _{b}\mathrm{Ric}_{ac}-R_{cadb}\xi _{d})+o(\mathbf{1}).
			\end{aligned}
			\label{VWDelta_symbol}
			%%LEAP%%%\label{eq2.13}
		\end{equation}
		Integrating \reftext{\eqref{VWDelta_symbol}} over $S^{2m-1}$ and substituting ${\mathbf{x}}=0$ we have, for the first term,
		\begin{equation*}
			\begin{aligned}
				\int _{||\xi ||=1}-\frac{m(m+1)}{3||\xi ||^{2m+4}} &V^{a}W^{b}
				\mathrm{Ric}_{cd}\xi _{a}\xi _{b}\xi _{c}\xi _{d} =
				\\
				& =\frac{v_{n-1}}{2m(2m+2)}(\delta _{ab}\delta _{cd}+\delta _{ac}
				\delta _{bd} +\delta _{ad}\delta _{bc})\frac{-m(m+1)}{3} V^{a} W^{b}
				\mathrm{Ric}_{cd}
				\\
				& =-\frac{v_{n-1}}{12}V^{a}W^{a}R-\frac{v_{n-1}}{6}V^{a}W^{b}
				\mathrm{Ric}_{ab}\xch{,}{.}
			\end{aligned}
		\end{equation*}
		and for the second term,
		\begin{equation*}
			\begin{aligned}
				\int _{||\xi ||=1}\frac{2m}{3||\xi ||^{2m+2}} & V^{a}W^{b}\xi _{c}(
				\xi _{a}\mathrm{Ric}_{bc}+\xi _{b}\mathrm{Ric}_{ac}-R_{cadb}\xi _{d}) =
				\\
				& =\frac{v_{n-1}}{2m}\cdot \frac{2m}{3}V^{a}W^{b}(\delta _{ac}
				\mathrm{Ric}_{bc}+\delta _{bc}\mathrm{Ric}_{ac}-\delta _{cd}R_{cadb})
				\\
				& =\frac{v_{n-1}}{3}V^{a}W^{b}\mathrm{Ric}_{ab}.
			\end{aligned}
		\end{equation*}
		Combining them together and integrating over the manifold we get the result, which is obviously symmetric and bilinear.
	\end{proof}
	%
	%r2.5 #&#
	\begin{rem}%
		%%LEAP%%%\label{rem2.5}
		\label{localgG}
		A localized version of the functionals $\mathcal{g}^{\Delta}$ and
		${\mathcal G}^{\Delta}$ is automatic since they satisfy,
		%
		%e2.14 #&#
		\begin{equation}
			\mathcal{W}( f VW \Delta ^{-n-1}) = \mathcal{g}^{\Delta}(fV,W)=
			\mathcal{g}^{\Delta}(V,fW),
			\label{eq2.14}
		\end{equation}
		%
		%e2.15 #&#
		\begin{equation}
			\mathcal{W}( f VW \Delta ^{-n}) = \mathcal{G}^{\Delta}(fV,W)=
			\mathcal{G}^{\Delta}(V,fW)
			\label{eq2.15}
		\end{equation}
		for all $f\in C^{\infty}(M)$.
	\end{rem}
	We will call $\mathcal{g}^{\Delta}$ and $\mathcal{G}^{\Delta}$ respectively
	\textit{metric} and \textit{Einstein} (spectral) functionals.
	
	%s3 #&#
	\section{Spectral functionals for the operators of Laplace type}
	\label{sec3}
	
	In this section, we will demonstrate how the results vary if one passes
	to the Laplace-type operators, acting on sections of a vector bundle
	$V$ of rank $\text{rk}(V)$. These operators generalize the scalar Laplacian
	in the sense that they have the same principal symbol (times the matrix
	unit), yet they may contain both some nontrivial connections and torsion.
	
	For this purpose we assume that there is a connection $\nabla $ on the
	vector bundle $V$, i.e. for any vector field $X$ on $M$, we have a covariant
	derivative $\nabla _{X}$ on the module of smooth sections of $V$. Using
	the notation $\nabla _{a}:=\nabla _{\partial _{a}}$ in local normal coordinates
	around a fixed point on the manifold we have:
	\begin{equation*}
		\nabla _{a}= \partial _{a} - {\mathbf{T}}_{a},
	\end{equation*}
	where each ${\mathbf{T}}_{a}$ is a $C^{\infty}(M)$ endomorphism of the sections
	of $V$. Using normal coordinates and expanding ${\mathbf{T}}_{a}$ around
	${\mathbf{x}}=0$ we have,
	\begin{equation*}
		{\mathbf{T}}_{a}(x) = T_{a} + T_{ab} x^{b} + o({\mathbf{x}}).
	\end{equation*}
	Thus, we can write the symbol of the generalized Laplace operator
	\begin{equation*}
		\Delta _{T} = - g^{ab} ( \nabla _{a} \nabla _{b} - \Gamma ^{c}_{ab}
		\nabla _{c}),
	\end{equation*}
	in the normal coordinates as the sum of
	%
	%e3.1 #&#
	\begin{equation}
		\label{LapTF0}
		\begin{aligned}
			{\mathfrak a}_{2} =& \bigl( \delta _{ab} + \frac{1}{3} R_{acbd} x^{c} x^{d}
			\bigr) \xi _{a} \xi _{b} +o({\mathbf{x^{2}}}),
			\\
			{\mathfrak a}_{1} = & \frac{2i}{3} \mathrm{Ric}_{ab} x^{a} \xi _{b} + (2
			i T_{a} \xi _{a} + 2i T_{ab} x^{b} \xi _{a}) +o({\mathbf{x}}),
			\\
			{\mathfrak a}_{0} = & T_{aa} - T_{a} T_{a} + o({\mathbf{1}}).
		\end{aligned}
		%
		%%LEAP%%%\label{eq3.1}
	\end{equation}
	Then the parts of order $-2, -3, -4$ of the inverse of $\Delta _{T}$ expanded in ${\bf x}$ up to the order, respectively $2,1,0$, are
	%
	%e3.2 #&#
	\begin{equation}
		\begin{aligned}
			&\mathfrak b_{2} = ||\xi ||^{-4} \bigl( \delta _{ab} - \frac{1}{3} R_{acbd}
			x^{c} x^{d} \bigr) \xi _{a} \xi _{b} + o(\mathbf{x^{2})},
			\\
			& \mathfrak b_{3} = -\frac{2i}{3} \mathrm{Ric}_{ab} x^{a} \xi _{d} ||
			\xi ||^{-4} -\bigl( 2 i T_{a} \xi _{a} + 2i T_{ab} x^{b} \xi _{a}
			\bigr) ||\xi ||^{-4} + o({\mathbf{x}}),
			\\
			&\mathfrak b_{4} = \frac{2}{3} \mathrm{Ric}_{ab} \xi _{a} \xi _{b} ||
			\xi ||^{-6} - 4 T_{a} T_{b} \xi _{a} \xi _{b}||\xi ||^{-6}
			\\
			& \qquad - (T_{aa} -T_{a} T_{a} ) ||\xi ||^{-4} + 4 T_{ab} \xi _{a}
			\xi _{b} ||\xi ||^{-6} + o(\mathbf{1) .
		}\end{aligned}
		\label{LapTF1}
		%%LEAP%%%\label{eq3.2}
	\end{equation}
	%
	%t3.1 #&#
	\begin{thm}
		\label{thm3.1}
		The functional
		\begin{equation*}
			\mathcal{g}^{\Delta _{T}}(V,W):= \mathcal{W}( \nabla _{V}\nabla _{W}
			\Delta _{T}^{-n-1})
		\end{equation*}
		does not depend on the connection ${\mathbf{T}}$, and
		\begin{equation*}
			\mathcal{g}^{\Delta _{T}}(V,W)= \text{rk}(V)\, \mathcal{g}^{\Delta}(V,W).
		\end{equation*}
	\end{thm}
	\begin{proof}
		This is evident, since the principal symbol does not depend on
		${\mathbf{T}}$.
	\end{proof}
	%
	%t3.2 #&#
	\begin{thm}%
		%%LEAP%%%\label{thm3.2}
		\label{lifteinstein}
		The functional
		\begin{equation*}
			\mathcal{G}^{\Delta _{T}}(V,W):= \mathcal{W}( \nabla _{V}\nabla _{W}
			\Delta _{T}^{-n})
		\end{equation*}
		is equal to
		%
		%e3.3 #&#
		\begin{equation}
			\mathcal{G}^{\Delta _{T}}(V,W)= {\frac{v_{n-1}}{6}} \text{rk}(V)\,
			\int _{M} G(V,W)\, vol_{g} + {\frac{v_{n-1}}{2}} \int _{M} F(V,W)\, vol_{g},
			\label{eq3.3}
		\end{equation}
		where
		\begin{equation*}
			F(V,W) = \text{Tr\ } V^{a} W^{b} F_{ab},
		\end{equation*}
		and $F_{ab}$ is the curvature tensor of the connection ${\mathbf{T}}$. For
		${\mathbf{T}}=0$ it is equal to $\text{rk}(V)\mathcal{G}^{\Delta}(V,W)$.
	\end{thm}
	\begin{proof}
		First we compute the leading symbols of $(\Delta _{T})^{-m}$ up to the
		appropriate order in \textbf{x},
		%
		%e3.4 #&#
		\begin{equation}
			\begin{aligned}
				&\mathfrak c_{2m} = ||\xi ||^{-2m-2} \left ( \delta _{ab} -
				\frac{m}{3} R_{ajbk} x^{j} x^{k} \right ) \xi _{a} \xi _{b} + o(\mathbf{x^{2})},
				\\
				&\mathfrak c_{2m+1}=\frac{-2 mi}{3} ||\xi ||^{-2m-2} \mathrm{Ric}_{ak}
				x^{k} \xi _{a} - 2 m i ||\xi ||^{-2m-2} \bigl( T_{a} \xi _{a} + T_{ab}
				x^{b} \xi _{a} \bigr) + o(\mathbf{x)
				}\\
				&\mathfrak c_{2m+2}=\frac{m(m+1)}{3} ||\xi ||^{-2m-4} \mathrm{Ric}_{ab}
				\xi _{a}\xi _{b}
				\\
				& \qquad \qquad - 2 m(m+1) ||\xi ||^{-2m-4} \, T_{a} T_{b}\, \xi _{a}
				\xi _{b} + m (T_{a} T_{a} - T_{aa}) ||\xi ||^{-2m-2}
				\\
				& \qquad \qquad +2m (m+1) ||\xi ||^{-2m-4} \, T_{ab} \, \xi _{a} \xi _{b}
				+ o(\mathbf{1).
			}\end{aligned}
			\label{LapTF3}
			%%LEAP%%%\label{eq3.4}
		\end{equation}
		Expanding in a similar way
		\begin{equation*}
			\nabla _{V} = V^{a}(\partial _{a} - T_{a} - T_{ab} x^{b}) + o({\mathbf{x}}),
			\qquad \nabla _{W} = W^{a}(\partial _{a} - T_{a} - T_{ab} x^{b}) + o({
				\mathbf{x}}),
		\end{equation*}
		for vector fields $V$ and $W$, we compute only the terms of the symbol
		of $\nabla _{V}\nabla _{W} \Delta _{T}^{-m}$ of order $-2m$, which depend
		on $T_{a}$ and $T_{ab}$ at $x=0$, as the remaining terms would be identical
		to the case considered earlier,
		%
		%e3.5 #&#
		\begin{equation}
			\begin{aligned}
				\sigma _{-2m}( \nabla _{V}\nabla _{W} \Delta _{T}^{-m}) &= \bigl( 2m(m+1)
				T_{a} T_{b}\, \xi _{a} \xi _{b} + m T_{aa} ||\xi ||^{2} - m T_{a} T_{a}
				||\xi ||^{2}
				\\
				& \quad -2m(m+1) T_{ab}\, \xi _{a} \xi _{b} \bigr) V^{j} W^{k} \xi _{j}
				\xi _{k} || \xi ||^{-2m-4}
				\\
				& \quad +2 m V^{a} (\partial _{a} W^{b} ) T_{c} \xi _{b} \xi _{c} ||
				\xi ||^{-2m-2}
				\\
				& \quad - 2 m (V^{a} W^{b} + V^{b} W^{a}) T_{a} T_{c} \xi _{b} \xi _{c}
				||\xi ||^{-2m-2}
				\\
				& \quad + V^{a} W^{b} T_{a} T_{b} ||\xi ||^{-2m} - V^{a} W^{b} T_{ba} ||
				\xi ||^{-2m}
				\\
				& \quad + 2 m T_{ab} (V^{c} W^{b} + W^{c} V^{b}) \xi _{a} \xi _{c} ||
				\xi ||^{-2m-2}
				\\
				& \quad - V^{a} (\partial _{a} W^{b} ) T_{b} ||\xi ||^{-2m}.
			\end{aligned}
			\label{VWLapTF1}
			%%LEAP%%%\label{eq3.5}
		\end{equation}
		Integrating it over $S^{2m-1}$ and setting ${\mathbf{x}}=0$ we have the following
		result for the density of the Wodzicki residue (before taking the trace
		over endomorphisms)
		%
		%e3.6 #&#
		\begin{equation}
			\begin{aligned}
				\mathcal{w}(\nabla _{V}\nabla _{W} \Delta _{T}^{-m}) &= v_{n-1}
				\biggl( \frac{1}{2} V^{a} W^{b} \bigl( T^{2} \delta _{ab} + T_{a} T_{b}
				+T_{b} T_{a} + T_{cc} \delta _{ab}
				\\
				&\quad - T^{2} \delta _{ab} - T_{cc} \delta _{ab} -T_{ab} - T_{ba}
				\bigr) + V^{a} (\partial _{a} W^{b}) T_{b}
				\\
				&\quad - V^{a} W^{b} (T_{a} T_{b} +T_{b} T_{a}) + V^{a} W^{b} T_{a} T_{b}
				- V^{a} W^{b} T_{ba}
				\\
				& \quad + V^{a} W^{b} (T_{ab} + T_{ba}) - V^{a} (\partial _{a} W^{b}) T_{b}
				\biggr)
				\\
				& = \frac{1}{2} v(S^{n-1}) V^{a} W^{b} \bigl( T_{ab} - T_{ba} + T_{a} T_{b}
				- T_{b} T_{a} \bigr)
				\\
				& = \frac{1}{2} v(S^{n-1}) V^{a} W^{b} F_{ab} ,
			\end{aligned}
			\label{eq3.6}
		\end{equation}
		where $F_{ab}$ is the curvature tensor of the connection $ {\mathbf{T}}$, as
		indeed in the normal coordinates:
		\begin{equation*}
			F_{ab} = - ( \partial _{a} {\mathbf{T}}_{b} - \partial _{b} {\mathbf{T}}_{a}) + [{
				\mathbf{T}}_{a}, {\mathbf{T}}_{b}] = T_{ab} - T_{ba} + [T_{a}, T_{b}] + o(\mathbf{1)},
		\end{equation*}
		which finishes the proof.
	\end{proof}
	Note that the functionals $\mathcal{g}^{\Delta _{T}}$ and
	$\mathcal{G}^{\Delta _{T}}$ are automatically `localised' as in \reftext{Remark~\ref{localgG}}.
	
	We finish the section by demonstrating that any 0-order perturbation of
	the Laplace (type) operator $\Delta _{T}$ does not modify the metric functional,
	whereas the Einstein functional is modified by a term whose density involves
	the metric functional multiplied by the trace of the 0-order term.
	%
	%l3.3 #&#
	\begin{lem}%
		%%LEAP%%%\label{lem3.3}
		\label{laplscal}
		For the Laplace type operator $\Delta _{T,E}:= \Delta _{T}+ E$, where
		$E$ is an endomorphism of the vector bundle, the metric functional does
		not depend on $E$,
		%
		%e3.7 #&#
		\begin{equation}
			\mathcal{g}^{\Delta _{T,E}}(V,W)= \mathcal{g}^{\Delta _{T}}(V,W)\xch{,}{.}
			\label{eq3.7}
		\end{equation}
		whereas the Einstein functional
		\begin{equation*}
			\mathcal{G}^{\Delta _{T,E}}(V,W):= \mathcal{W}( \nabla _{V}\nabla _{W}
			\Delta _{T,E}^{-m})
		\end{equation*}
		reads,
		%
		%e3.8 #&#
		\begin{equation}
			\mathcal{G}^{\Delta _{T,E}}(V,W)= \mathcal{G}^{\Delta _{T}}(V,W) +
			\frac{1}{2} \int _{M} (\text{Tr\ } E) \, g(V,W)~vol_{g}.
			\label{eq3.8}
		\end{equation}
	\end{lem}
	\begin{proof}
		The first statement is obvious. For the second statement by computing the
		symbols of $\Delta ^{-m}$ we first see from the formula \reftext{\eqref{SymPinverse}} that $E$ enters the symbol of order $-4$ linearly and,
		consequently, using \reftext{\eqref{commutative_operator_powers}} the only additional
		term that appears in the symbol of $\Delta ^{-m}$ would appear in order
		$-2m-2$ as
		\begin{equation*}
			- m E ||\xi ||^{-2m-2}.
		\end{equation*}
		The only term depending on $E$ that arises in the contribution to the respective
		symbol of order $-2m$ of the product
		$\nabla _{V}\nabla _{W} \Delta ^{-m}$ would then be
		\begin{equation*}
			m E V^{a} W^{b} \xi _{a} \xi _{b} ||\xi ||^{-2m-2},
		\end{equation*}
		which after integrating over the sphere, taking trace and using \reftext{(\ref{gNorm})}
		gives
		\begin{equation*}
			\frac{1}{2}\, (\text{Tr\ } E) g(V,W).
			\qedhere \end{equation*}
	\end{proof}
	\noindent
	As follows from \cite{Gi04,Gi84} the operators $\Delta _{T,E}$ are the
	most general Laplace-type operators on $M$.
	
	%s3.1 #&#
	\subsection{Spectral functionals for the spin Laplacian}
	\label{sec3.1}
	
	A particularly interesting example is the application of the above result
	to the case of the spinor bundle of rank $2^{m}$ (assuming that the manifold
	$M$ has a spin structure, which we fix).
	
	Recall that in terms of a (local) basis $e_{i}$ of orthonormal vector fields
	on $M$ the Levi-Civita covariant derivative reads
	%
	%e3.9 #&#
	\begin{equation}
		\label{connform1}
		\nabla _{e_{i}}e_{j} = \alpha _{ijk} e_{k},
		%%LEAP%%%\label{eq3.9}
	\end{equation}
	where $\alpha _{ijk}$ is the Levi-Civita connection, expressed explicitly
	in the basis $e_{i}$ through the structure constants $c_{ijk}$ of the commutators
	of orthonormal vector fields,
	%
	%e3.10 #&#
	\begin{equation}
		\label{connform}
		\alpha _{ijk}:= \frac{1}{2} (c_{ijk}+ c_{kij} + c_{kji}), \qquad [ e_{i},
		e_{j} ] = c_{ijk} e_{k} .
		%%LEAP%%%\label{eq3.10}
	\end{equation}
	If $M$ is a spin manifold, then the lift of the Levi-Civita covariant derivative
	to Dirac spinor fields reads
	%
	%e3.11 #&#
	\begin{equation}
		\nabla ^{(s)}_{e_{i}} = e_{i} - \frac{1}{4}\alpha _{ijk}\gamma ^{j}
		\gamma ^{k},
		\label{eq3.11}
	\end{equation}
	with $\gamma ^{j}$ as in Section~\ref{13}. The spinorial Laplace operator
	is
	%
	%e3.12 #&#
	\begin{equation}
		\Delta ^{(s)}:= \nabla ^{(s)*} \nabla ^{(s)} = -\nabla ^{(s)}_{e_{i}}
		\nabla ^{(s)}_{e_{i}}+\nabla ^{(s)}_{\nabla _{e_{i}}e_{i}} =-\nabla ^{(s)}_{e_{i}}
		\nabla ^{(s)}_{e_{i}}+\alpha _{iij}\nabla ^{(s)}_{e_{j}}.
		\label{eq3.12}
	\end{equation}
	In the normal coordinates around a fixed point of the manifold one has
	then
	%
	%e3.13 #&#
	\begin{equation}
		e_{i}= \frac{\partial}{\partial x^{i}} - \frac{1}{6} R_{ijk\ell}\, x^{j}
		x^{k} \frac{\partial}{\partial x^{\ell}} + o(\mathbf{x^{2})
			\label{eq3.13}
	}\end{equation}
	and
	%
	%e3.14 #&#
	\begin{equation}
		\alpha _{ijk} = - \frac{1}{2} R_{\ell ijk} x^{\ell }+ \mathbf{o(}\mathbf{x)},\mathbf{
			\label{eq3.14}
	}\end{equation}
	and therefore,
	%
	%e3.15 #&#
	\begin{equation}
		\begin{aligned}
			\Delta ^{(s)} = -\partial _{i}\partial _{i} + \frac{1}{3} R_{ijk\ell}
			\, x^{j}x^{k} \partial _{i} \partial _{\ell }+ o(\mathbf{x^{2})
			}\\
			+ \frac{2}{3}R_{ij}\, x^{i}\partial _{j} + \frac{1}{4}R_{i\ell jk}\, x^{
				\ell }\gamma ^{j} \gamma ^{k}\partial _{i} + o(\mathbf{x)
			}\\
			+ o(\mathbf{1)},
		\end{aligned}
		\label{eq3.15}
	\end{equation}
	where in the three lines we collected terms of different order, expanding
	them up to the same order in normal coordinates $\mathbf{x}$ as their order.
	
	By comparing the symbol with \reftext{\eqref{LapTF0}} it is easy to identify then
	$\Delta ^{(s)}$ as a Laplace-type operator $\Delta _{T}$, with the respective
	expansion of the spin connection,
	\begin{equation*}
		T_{a} = 0, \qquad T_{ab} = \frac{1}{8} R_{abjk} \gamma ^{j} \gamma ^{k}.
	\end{equation*}
	We have then
	%
	%p3.4 #&#
	\begin{prop}
		\label{prop3.4}
		The metric and Einstein functional associated to the spin Laplacian are
		proportional to the functionals of the scalar Laplacian,
		%e3.16 #&#
		%
		\begin{equation}
			\begin{aligned}
				\mathcal{g}^{\Delta ^{(s)}}(V,W)&:= \mathcal{W}( \nabla ^{(s)}_{V}
				\nabla ^{(s)}_{W} (\Delta ^{(s)})^{-n-2}) = 2^{m} \mathcal{g}^{\Delta}(V,W),
				\\
				\mathcal{G}^{\Delta ^{(s)}}(V,W)&:= \mathcal{W}( \nabla ^{(s)}_{V}
				\nabla ^{(s)}_{W} (\Delta ^{(s)})^{-n}) = 2^{m} \mathcal{G}^{\Delta}(V,W).
			\end{aligned}
			\label{eq3.16}
		\end{equation}
	\end{prop}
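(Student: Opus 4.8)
The plan is to reduce both identities to the results already established for general Laplace-type operators, using the identification of the spinorial Laplacian $\Delta^{(s)}$ with the operator $\Delta_T$ on the rank-$2^m$ spinor bundle whose connection data, read off from the expansion just before the Proposition, are $T_a=0$ and $T_{ab}=\tfrac18 R_{abjk}\gamma^j\gamma^k$. Once this identification is in place, the two functionals follow by feeding these data into the formulas of Section 3 and carrying out the trace over the spinor fibre.

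For the metric functional I would simply invoke the Laplace-type metric theorem, which asserts that $\mathcal{g}^{\Delta_T}$ is independent of the connection $\mathbf{T}$ and equals $\hbox{rk}(V)\,\mathcal{g}^{\Delta}$. Since the spinor bundle has rank $2^m$, this yields $\mathcal{g}^{\Delta^{(s)}}(V,W)=2^m\,\mathcal{g}^{\Delta}(V,W)$ with no further computation. For the Einstein functional I would apply Theorem \ref{lifteinstein}, which gives
\[
\mathcal{G}^{\Delta^{(s)}}(V,W)=2^m\,\mathcal{G}^{\Delta}(V,W)+\frac{v_{n-1}}{2}\int_M \Tr\bigl(V^aW^bF_{ab}\bigr)\,vol_g,
\]
so the whole statement hinges on showing that the curvature correction vanishes. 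Here $F_{ab}=T_{ab}-T_{ba}+[T_a,T_b]$ is the curvature of the spin connection; with $T_a=0$ and the antisymmetry $R_{abkj}=-R_{abjk}$ in the last index pair, this collapses to the standard spin curvature $F_{ab}=\tfrac14 R_{abjk}\gamma^j\gamma^k$. Taking the fibrewise trace gives $\Tr F_{ab}=\tfrac14 R_{abjk}\,\Tr(\gamma^j\gamma^k)$, and since $\Tr(\gamma^j\gamma^k)=2^m\delta^{jk}$ while $R_{abjk}$ is antisymmetric in $j,k$, the contraction vanishes. Hence $\Tr(V^aW^bF_{ab})=V^aW^b\,\Tr F_{ab}=0$ and the correction term drops out, leaving $\mathcal{G}^{\Delta^{(s)}}(V,W)=2^m\,\mathcal{G}^{\Delta}(V,W)$.

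The only substantive point is the vanishing of the curvature correction; everything else is bookkeeping with the rank factor $2^m$. That vanishing is precisely the classical fact that the spin curvature two-form $\tfrac14 R_{abjk}\gamma^j\gamma^k$ is traceless, which I expect to be the crux of the argument, although it reduces to the short computation above once $F_{ab}$ has been brought into that form.
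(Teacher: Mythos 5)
Your proposal is correct and follows essentially the same route as the paper: identify $\Delta^{(s)}$ as $\Delta_T$ with $T_a=0$ and $T_{ab}=\tfrac18 R_{abjk}\gamma^j\gamma^k$, invoke the general Laplace-type theorems for the rank factor $2^m$, and kill the curvature correction by observing that $\hbox{Tr}(R_{abjk}\gamma^j\gamma^k)\propto R_{abjk}\delta^{jk}=0$ by skew-symmetry of the Riemann tensor in its last two indices. The only slip is cosmetic: the simplification $T_{ab}-T_{ba}=\tfrac14 R_{abjk}\gamma^j\gamma^k$ uses antisymmetry in the \emph{first} index pair ($R_{bajk}=-R_{abjk}$), not the last, though this does not affect the trace argument or the conclusion.
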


	\begin{proof}
		This is a consequence of \reftext{Theorem~\ref{lifteinstein}}. While the result for
		the metric functional is obvious for the Einstein functional we see the
		trace of the additional term, arising from the curvature of the connection
		vanishes,
		\begin{equation*}
			\frac{1}{4} \text{Tr\ } V^{a} W^{b} \bigl( R_{abjk} \gamma ^{j}
			\gamma ^{k} - R_{bajk} \gamma ^{j} \gamma ^{k} \bigr) = 0,
		\end{equation*}
		due to the skew \xch{symmetry}{symetry} of the Riemann tensor in the two last indices.
	\end{proof}
	
	%s3.2 #&#
	\subsection{Spectral functionals for the Dirac operator}
	\label{sec3.2}
	
	An immediate application of \reftext{Lemma~\ref{laplscal}} is the computation of
	the Einstein functional using the Dirac operator instead of the Laplacian.
	The Dirac operator is, in a local basis of orthonormal frames, a first-order
	differential operator
	%
	%e3.17 #&#
	\begin{equation}
		D = i \gamma ^{j} \nabla ^{(s)}_{e_{j}},
		\label{Dirac}
		%%LEAP%%%\label{eq3.17}
	\end{equation}
	and its square $D^{2}$ differs from the spinorial Laplacian only by a quarter
	of the scalar of curvature
	\begin{equation*}
		D^{2} = \Delta ^{(s)} + \frac{1}{4} R.
	\end{equation*}
	More generally, if we consider a spin$_{c}$ structure and the Dirac operator
	twisted by a $U(1)$-connection,
	\begin{equation*}
		D_{A} = D + A,
	\end{equation*}
	the respective formula reads,
	\begin{equation*}
		D_{A}^{2} = \Delta ^{(s)} + \frac{1}{4} R + F,
	\end{equation*}
	where
	\begin{equation*}
		F = \gamma ^{j} \gamma ^{k} F_{jk},
	\end{equation*}
	and $F_{jk}$ is the curvature of $A$. Then as the consequence of \reftext{Lemma~\ref{laplscal}}
	we have,
	%
	%p3.5 #&#
	\begin{prop}
		\label{prop3.5}
		The spectral metric and Einstein functionals associated with the Dirac
		operator $D_{A}$ do not depend on the connection $A$ and read,
		%
		%e3.18 #&#
		\begin{equation}
			\begin{aligned}
				\mathcal{g}^{D_{A}^{2}}(V,W) &:= \mathcal{W}(\nabla ^{(s)}_{V}
				\nabla ^{(s)}_{W} |D_{A}|^{-n-2}) = 2^{m} \mathcal{g}^{\Delta}(V,W),
				\\
				\mathcal{G}^{D_{A}^{2}}(V,W) &:= \mathcal{W}(\nabla ^{(s)}_{V}
				\nabla ^{(s)}_{W} |D_{A}|^{-n}) = 2^{m} \left ( \mathcal{G}^{\Delta}(V,W)
				+ \frac{1}{8} {\mathcal R}(V,W) \right ),
			\end{aligned}
			\label{eq3.18}
		\end{equation}
		where
		\begin{equation*}
			{\mathcal R}(V,W) = v_{n-1} \int _{M} R(g) \, g(V,W)\, vol_{g}.
		\end{equation*}
	\end{prop}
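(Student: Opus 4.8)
The plan is to recognize that $|D_A|^{-n}=(D_A^2)^{-m}$ (recall $n=2m$) and that, by the Lichnerowicz-type formula $D_A^2=\Delta^{(s)}+\tfrac14 R+F$ recalled above, $D_A^2$ is a Laplace-type operator of the form $\Delta_{T,E}$ with $E=\tfrac14 R+F$ a zeroth-order endomorphism, where $F=\gamma^j\gamma^k F_{jk}$ and $T$ is the spin connection already identified in the treatment of $\Delta^{(s)}$. The whole statement then reduces to feeding this decomposition into Lemma \ref{laplscal}, layered on top of the spin-Laplacian Proposition.

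For the metric functional, the first assertion of Lemma \ref{laplscal} states that $\mathcal{g}$ is insensitive to any $0$-order modification $E$, so $\mathcal{g}^{D_A^2}(V,W)=\mathcal{g}^{\Delta^{(s)}}(V,W)$; combined with the spin-Laplacian Proposition this gives $2^m\,\mathcal{g}^{\Delta}(V,W)$ and, in particular, independence from $A$.

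For the Einstein functional I would apply the second assertion of Lemma \ref{laplscal}, which produces the correction $\tfrac12\int_M(\hbox{Tr\ } E)\,g(V,W)\,vol_g$ on top of $\mathcal{G}^{\Delta^{(s)}}$. The computation of the trace over the spinor bundle (rank $2^m$) splits into two pieces: the scalar part contributes $\hbox{Tr\ }(\tfrac14 R)=\tfrac{2^m}{4}R$, while the gauge part vanishes, $\hbox{Tr\ }(\gamma^j\gamma^k F_{jk})=2^m\delta^{jk}F_{jk}=0$, using the Clifford trace identity $\hbox{Tr\ }(\gamma^j\gamma^k)=2^m\delta^{jk}$ together with the antisymmetry $F_{jk}=-F_{kj}$ of the $U(1)$ curvature; this vanishing is precisely what removes the dependence on $A$. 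Substituting $\hbox{Tr\ } E=\tfrac{2^m}{4}R$ and using $\mathcal{G}^{\Delta^{(s)}}=2^m\mathcal{G}^{\Delta}$ yields $\mathcal{G}^{D_A^2}(V,W)=2^m\mathcal{G}^{\Delta}(V,W)+\tfrac{2^m}{8}\int_M R\,g(V,W)\,vol_g$, which is exactly $2^m\big(\mathcal{G}^{\Delta}+\tfrac18\mathcal{R}\big)$ once one reads off the definition of $\mathcal{R}(V,W)$.

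There is no genuine obstacle here beyond bookkeeping: the proposition is a direct corollary of Lemma \ref{laplscal} and the spin-Laplacian Proposition. The two points that require care are the Clifford trace identity ensuring $\hbox{Tr\ } F=0$ (so that $A$ drops out of both functionals), and tracking the sphere-integration normalization $v_{n-1}$ carried by the density in Lemma \ref{laplscal} so that the final constant matches the stated $\tfrac18\mathcal{R}(V,W)$ with $\mathcal{R}(V,W)=v_{n-1}\int_M R\,g(V,W)\,vol_g$.
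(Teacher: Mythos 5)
Your argument is exactly the one the paper intends: it states this proposition as an immediate consequence of Lemma \ref{laplscal} applied to $D_A^2=\Delta^{(s)}+\frac14 R+F$ viewed as $\Delta_{T,E}$ with $E=\frac14 R+F$, combined with the spin-Laplacian proposition, and you have correctly supplied the trace computation $\hbox{Tr\ } E=\frac{2^m}{4}R$ (with the gauge part killed by $\hbox{Tr\ }(\gamma^j\gamma^k)=2^m\delta^{jk}$ against the antisymmetry of $F_{jk}$) that makes the constants and the $A$-independence come out as stated. Your remark about carrying the $v_{n-1}$ normalization through Lemma \ref{laplscal} is also the right bookkeeping point to watch.
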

	%
	
	%s4 #&#
	\section{Metric and Einstein functionals of differential forms}
	\label{sec4}
	
	In differential geometry besides functionals on vector fields, one can
	alternatively consider functionals on the dual bimodule of one-forms. In
	this section, we will investigate whether the Einstein tensor (or, more
	precisely, its contravariant version) can be obtained from such functional
	using the spectral methods. For this purpose, we need to represent differential
	forms as differential operators, and a suitable way is to employ Clifford
	modules. We assume thus that $M$ is a $n=2m$ dimensional spin$_{c}$ manifold
	and use the Clifford representation of one-forms as $0$-order differential
	operators, that is, endomorphisms of a rank $2^{m}$ spinor bundle.
	
	As generating endomorphisms, we can work with Clifford multiplication by
	the local coframe basis $e^{j}$ dual to the orthonormal oriented frame
	basis $e_{j}$, which simply amounts to multiplication by (constant) gamma
	matrices $\gamma ^{j}$ as in Section~\ref{13}. Thus, let $v,w$, with the
	components with respect to local coordinates $v_{a}$ and $w_{a}$, respectively,
	be two differential forms represented in such a way as endomorphisms (matrices)
	$\hat v$ and $\hat w$ on the spinor bundle.
	%
	%t4.1 #&#
	\begin{thm}
		\label{thm4.1}
		The following spectral functionals of one-forms on a spin-c manifold
		$M$ of dimension $n$
		%
		%e4.1 #&#
		\begin{equation}
			\begin{aligned}
				\mathcal{g}_{D}(v,w) &:= \mathcal{W}\bigl( \hat v \hat w D^{-n}
				\bigr),
				\\
				\mathcal{G}_{D}(v,w) &:= \mathcal{W}\bigl( \hat v (D\hat w+\hat wD )
				D^{-n+1} \bigr)
				\\
				&\,\,=\mathcal{W}\bigl( (D\hat v+\hat vD ) \hat wD^{-n+1} \bigr),
			\end{aligned}
			\label{eq4.1}
		\end{equation}
		read
		%
		%e4.2 #&#
		\begin{equation}
			\label{H13}
			\begin{aligned}
				& \mathcal{g}_{D}(v,w) = 2^{m} v_{n-1} \int _{M} g(v,w)~vol_{g},
				\\
				& \mathcal{G}_{D}(v,w) = 2^{m} \frac{v_{n-1}}{6} \int _{M} G(v,w)~vol_{g}
				,
			\end{aligned}
			%
			%%LEAP%%%\label{eq4.2}
		\end{equation}
		where $g(v,w) = g^{ab} v_{a} w_{b}$ and
		$G(v,w) = \bigl( \text{Ric}^{ab} - \frac{1}{2} R g^{ab} \bigr) v_{a} w_{b}$,
		using the expressions $v=v_a dx^a$, $w=w_b dx^b$ in any local coordinates.
	\end{thm}
	\begin{proof}
		The proof of the formula for the metric functional $\mathcal{g}_{D}$ is
		easy and we skip it, concentrating on the Einstein functional, splitting
		it into two parts as follows:
		\begin{equation*}
			\mathcal{G}_{1}(v,w) = \mathcal{W}\bigl( \hat v D\hat w DD^{-n}
			\bigr),
		\end{equation*}
		and
		\begin{equation*}
			\mathcal{G}_{2}(v,w) = \mathcal{W}\bigl( \hat v \hat w D^{-n+2}
			\bigr).
		\end{equation*}
		We start with the first. Let us again work with normal coordinates
		$x$ around a fixed point on the manifold $M$ with ${\mathbf{x}} =0$. We can
		rewrite the first three leading symbols of $ D^{-n} =D^{-2m}$ (using \reftext{\eqref{LapTF3}} and the Lichnerowicz formula),
		%
		%e4.3 #&#
		\begin{equation}
			\begin{aligned}
				&\mathfrak c_{2m} = ||\xi ||^{-2m-2} \left ( \delta _{ab} -
				\frac{m}{3} R_{ajbk} x^{j} x^{k} \right ) \xi _{a} \xi _{b} + o(\mathbf{x^{2})},
				\\
				&\mathfrak c_{2m+1}=\frac{-2mi}{3} ||\xi ||^{-2m-2} \mathrm{Ric}_{ak} x^{k}
				\xi _{a} - 2 m i ||\xi ||^{-2m-2} \bigl( T_{ab} x^{b} \xi _{a} \bigr) +
				o(\mathbf{x)
				}\\
				&\mathfrak c_{2m+2}=\frac{m(m+1)}{3} ||\xi ||^{-2m-4} \mathrm{Ric}_{ab}
				\xi _{a}\xi _{b} - \frac{m}{4} R || \xi ||^{-2m-2} + o(\mathbf{1)\xch{,}{.}
			}\end{aligned}
			\label{symD-n}
			%%LEAP%%%\label{eq4.3}
		\end{equation}
		where
		\begin{equation*}
			T_{ab} = \frac{1}{8} R_{abjk} \gamma ^{j} \gamma ^{k},
		\end{equation*}
		and we have used the antisymmetry of $T_{ab}$. The Clifford images of 
		$v$ and $w$ have a local expansion around ${\mathbf{x}}=0$ in normal coordinates,
		\begin{equation*}
			\hat{v} = v_{a} \gamma ^{a} + o({\mathbf{1}}), \qquad \qquad \hat{w} = w_{a}
			\gamma ^{a} + w_{ab} \gamma ^{a} x^{b} + o({\mathbf{x}}),
		\end{equation*}
		where $w_{a}, v_{a}$ and $w_{ab}$ are constants. Note that here due to
		the metric in normal coordinates at $x=0$ being Euclidean, the gamma matrices
		satisfy $\{\gamma ^{a}, \gamma ^{b}\} = 2\delta ^{ab}$. Observe that the
		symbol of $\hat vD$ needs to be expanded to $o({\mathbf{1}})$ whereas symbol
		of $\hat wD$ to $o({\mathbf{x}})$, and so they read
		%
		%e4.4 #&#
		\begin{equation}
			\begin{aligned}
				&\sigma (\hat vD) = i v_{a} \gamma ^{a} \gamma ^{j} i \xi _{j} +o({
					\mathbf{1}}),
				\\
				&\sigma (\hat wD) = i w_{a} \gamma ^{a} \gamma ^{j} \bigl( i \xi _{j} +
				\frac{1}{8} R_{\ell jps} x^{\ell }\gamma ^{p} \gamma ^{s} \bigr) - i w_{ab}
				\gamma ^{a} \gamma ^{j} \xi _{j} x^{b} + o({\mathbf{x}})\xch{.}{,}
			\end{aligned}
			\label{eq4.4}
		\end{equation}
		The constants $w_{ab}$ arise from the dependence of the one-form
		$w$ on the coordinates around $x=0$.
		
		The symbol of $\hat vD\hat wD$ up to order $o({\mathbf{1}})$ in normal coordinates
		is
		%
		%e4.5 #&#
		\begin{equation}
			\begin{aligned}
				\sigma (\hat vD\hat wD) & = v_{a} w_{b} \gamma ^{a} \gamma ^{j}
				\gamma ^{b} \gamma ^{k} \xi _{j} \xi _{k} - \frac{1}{8} v_{a} w_{b}
				\gamma ^{a} \gamma ^{j} \gamma ^{b} \gamma ^{k} \gamma ^{p} \gamma ^{s}
				R_{jkps}
				\\
				& \quad -i v_{a} w_{bj} \gamma ^{a} \gamma ^{j} \gamma ^{b} \gamma ^{k}
				\xi ^{k} + o({\mathbf{1}}).
			\end{aligned}
			\label{eq4.5}
		\end{equation}
		We can omit additional terms with explicit dependence on normal coordinates
		as they will all vanish at $x=0$.
		
		The symbol of order $-n$ of the product $\hat vD \hat wD D^{-n}$ comes
		then (at $x=0$) as
		%
		%e4.6 #&#
		\begin{equation}
			\begin{aligned}
				\sigma _{-n}(\hat vD \hat wD D^{-n}) & = \frac{m(m+1)}{3} v_{a} w_{b}
				\gamma ^{a} \gamma ^{j} \gamma ^{b} \gamma ^{k} \text{Ric}_{rs} \xi _{j}
				\xi _{k} \xi _{r} \xi _{s} ||\xi ||^{-2m-4}
				\\
				& - \frac{m}{4} v_{a} w_{b} \gamma ^{a} \gamma ^{j} \gamma ^{b}
				\gamma ^{k} R \xi _{j} \xi _{k} ||\xi ||^{-2m-2}
				\\
				& -\frac{2m}{3} v_{a} w_{b} \gamma ^{a} \gamma ^{j} \gamma ^{b}
				\gamma ^{k}\left ( \text{Ric}_{rj} \xi _{k} \xi _{r} + \text{Ric}_{rk}
				\xi _{r} \xi _{j} \right ) ||\xi ||^{-2m-2}
				\\
				& - \frac{m}{4} v_{a} w_{b} \gamma ^{a} \gamma ^{j} \gamma ^{b}
				\gamma ^{k} \gamma ^{p} \gamma ^{q} \left ( R_{rjpq} \xi _{r} \xi _{k}
				+ R_{rkpq} \xi _{r} \xi _{j} \right ) ||\xi ||^{-2m-2}
				\\
				& + \frac{m}{3} v_{a} w_{b} \gamma ^{a} \gamma ^{j} \gamma ^{b}
				\gamma ^{k} \left ( R_{rjpk} \xi _{r} \xi _{p} + R_{rkpj} \xi _{r}
				\xi _{p} \right ) ||\xi ||^{-2m-2}
				\\
				& -\frac{1}{8} v_{a} w_{b} \gamma ^{a} \gamma ^{j} \gamma ^{b}
				\gamma ^{k} \gamma ^{p} \gamma ^{s} R_{jkps} ||\xi ||^{-2m} +o({\mathbf{1}}).
			\end{aligned}
			\label{eq4.6}
		\end{equation}
		We see at once that it does not contain any term with $w_{ab}$ hence the
		result will be bilinear in the differential forms. Integrating over
		$\xi \in S^{n-1}$ and taking the trace over the matrices $\gamma $ we obtain:
		\begin{equation*}
			\mathcal{G}_{1}(v,w) = 2^{m} \frac{v_{n-1}}{6} \int _{M} \Bigl( G(v,w)
			+ \frac{n-2}{4} g(v,w) R \Bigr)~vol_{g}.
		\end{equation*}
		The functional $ \mathcal{G}_{2}$ requires the computation of the symbol
		of $D^{-n+2}$ up to $o({\mathbf{1}})$ as the product of two forms is an operator
		of order zero. Then, using the explicit formula for the symbol of
		$D^{-n+2}$ \reftext{\eqref{symD-n}}, we obtain
		\begin{equation*}
			\mathcal{G}_{2}(v,w) = - 2^{m} \frac{v_{n-1}}{6} \int _{M}
			\frac{n-2}{4} g(v,w) R~vol_{g}.
		\end{equation*}
		This shows our statement about
		$\mathcal{G}_{D}=\mathcal{G}_{1}+\mathcal{G}_{2}$.
	\end{proof}
	%
	
	%s5 #&#
	\section{Towards noncommutative metric and Einstein functionals}
	\label{sec5}
	
	The spectral methods that we have proposed to obtain the metric and Einstein
	functionals are well suited for generalisation to the noncommutative case.
	So far, almost exclusively scalar geometric quantities (like scalar curvature)
	were computed for noncommutative tori, with conformally rescaled Laplace
	and Dirac operators (see \cite{FK19} for a review) and partial conformal
	rescaling \cite{DaSi15,CF19}. As the tensors carry more information than
	scalar geometric objects their study on noncommutative level is certainly
	more interesting though definitely complicated, especially that a general
	algebra may have no outer derivations which are usually regarded as the
	noncommutative counterpart of vector fields. On the other hand, differential
	forms are naturally associated to any spectral triple, with the latter
	deemed to best encode the notion of a Riemannian manifold in the noncommutative
	case.
	
	On the quantum tori we take advantage of both of these structures and correspondingly
	propose definitions of the associated spectral metric and Einstein tensors
	using the generalisation of the Wodzicki residue on the generalised algebra
	of symbols on noncommutative tori. We compute explicitly for 2- and 4-dimensional
	noncommutative tori the relevant functionals of outer derivations for the
	conformally rescaled Laplace operators and of one-forms with conformally
	rescaled Dirac operators. We also define analogous functionals for regular
	finitely summable spectral triples and study their product with the simplest
	nontrivial finite spectral triple.
	
	%s5.1 #&#
	\subsection{The metric and Einstein functionals for the Laplacian on a noncommutative tori}
	%%LEAP%%%\label{sec5.1}
	\label{51}
	
	The quantum tori are prominent examples of noncommutative manifolds
	which admit noncommutative analogues of many classical geometrical objects.
	In particular, there are outer derivations that act on the smooth algebra
	${\mathcal A}= C^{\infty}(\mathbb{T}^{n}_{\theta})$ and that can be interpreted
	as noncommutative vector fields, even though in general they form only
	a complex vector space rather than an ${\mathcal A}$-bimodule. It is then
	straightforward to identify a noncommutative counterpart of the flat-metric
	Laplace operator. This can also be generalised to the case of conformally
	rescaled geometry, where the conformal factor is taken as a positive element
	of the algebra of the noncommutative torus.
	
	We can therefore exploit our definition of the metric and Einstein spectral
	functionals to define the corresponding functionals for the noncommutative
	tori and compute them explicitly. For that purpose we use the pseudodifferential
	calculus of symbols as defined in \cite{CoTr11} and then used and developed by many authors (see \cite{FK19} for a review). The rules of the algebra
	of symbols are almost identical to the usual ones, however, with the partial
	derivatives replaced by derivations and the symbols valued in the algebra
	of noncommutative torus.
	
	This algebra admits a natural generalisation of the Wodzicki residue, which
	is defined as the trace of the integral over the cosphere
	$||\xi ||=1$ of the symbol of order $-n$ (where $n$ is the dimension of
	the torus). The existence of such Wodzicki residue trace on the algebra
	of symbols over the noncommutative 2-torus was demonstrated and discussed
	in \cite{FW11,LNP16,Si14}. We denote it again by $\mathcal{W}$.
	
	However, for the conformally rescaled $\mathbb{T}^{n}_{\theta}$ we find
	it convenient to work with an enlarged algebra $\hat{{\mathcal A}}$, which
	is generated by ${\mathcal A}$ and its copy ${\mathcal A}^{o}$ commuting
	with ${\mathcal A}$. Indeed for the Laplace-type operators, the relevant
	Weyl factor in principle can be taken from $\hat{{\mathcal A}}$, though
	usually it assumed to be in ${\mathcal A}$ and we adhere to this convention.
	For the spectral triple with conformally rescaled Dirac operator, the conformal factor is instead assumed to be from ${\mathcal A}^{o}$ as only in such a case the associated differential one-forms (generated by
	${\mathcal A}$ and its commutators with the Dirac operator) are bounded operators, on which our functionals will be defined.
	
	Now, the standard flat Dirac operator on $\mathbb{T}^{n}_{\theta}$ as well
	as its conformal rescaling are clearly first-order differential operators
	for the extended pseudodifferential calculus with symbols valued in the
	algebra $\hat{{\mathcal A}}$, since they are just built from the derivations
	on ${\mathcal A}$ which extend to derivations on
	$\hat{{\mathcal A}}$. Next, considering for simplicity only the \textit{strictly
		irrational} \cite{Ri90} noncommutative torus (with the center of algebra
	equal to $\mathbb{C}$), there is an obvious factorized trace
	$\tau ^{\otimes}$ on the enlarged algebra $\hat{{\mathcal A}}$ given by
	$\tau ^{\otimes}(ab^{o})=\tau (a)\tau (b^{o})$. Moreover, since
	$\tau ^{\otimes}$ is invariant under derivations like $\tau $, we use it
	to define the tracial Wodzicki residue on $\hat{{\mathcal A}}$-valued symbols
	as above, and still denote it by $\mathcal{W}$ and its density by
	$\mathcal{w}$.
	
	%s5.1.1 #&#
	\subsubsection{The metric and Einstein functionals for the conformally rescaled Laplacian on a noncommutative 2-torus}
	%%LEAP%%%\label{sec5.1.1}
	\label{511}

	Since every two-dimensional Riemannian manifold has a vanishing Einstein
	tensor, it is natural to ask whether this holds also for the noncommutative
	torus. We shall demonstrate that it is true in the case of a conformally
	rescaled Laplace operator. We denote by
	${\mathcal A}= C^{\infty}(\mathbb{T}^{2}_{\theta})$ the algebra of smooth
	elements of the noncommutative two-torus and by $\tau $ the standard trace
	over its $C^{\ast}$-algebraic completion. By
	${\mathcal H}= L^{2}(\mathbb{T}^{2}_{\theta}, \tau )$ we denote the standard
	Hilbert space obtained by the GNS construction for the tracial state
	$\tau $. By $\delta _{1}, \delta _{2}$ we denote the basis of derivations
	of the algebra ${\mathcal A}$ implemented as densely defined operators
	on ${\mathcal H}$.
	
	%d5.1 #&#
	\begin{defn}
		\label{defn5.1}
		Let $h \in C^{\infty}(\mathbb{T}^{2}_{\theta})$ be positive, invertible,
		with a bounded inverse. We define as the conformally rescaled Laplace operator
		for the noncommutative torus the following densely defined selfadjoint
		operator on ${\mathcal H}$:
		%
		%e5.1 #&#
		\begin{equation}
			\Delta _{h} = h^{-1} \Delta \, h^{-1} ,
			\label{eq5.1}
		\end{equation}
		where
		%
		%e5.2 #&#
		\begin{equation}
			\Delta = \sum _{a=1,2} \delta _{a}^{2}.
			\label{eq5.2}
		\end{equation}
	\end{defn}
	This definition is motivated by the commutative case $\theta =0$ where
	the operator $\Delta _{h}$ is unitarily equivalent to the operator
	$h^{-2} \Delta $, which is a densely defined self-adjoint operator on
	${\mathcal H}_{h} = L^{2}(\mathbb{T}^{2}, \tau _{h})$, where
	$\tau _{h}(a) = \tau (h^{2} a)$.\eject
	
	Similarly, we define the vector fields as appropriate self-adjoint generalisations
	of operators unitarily equivalent to derivations,
	\begin{equation*}
		V_{h} = \sum _{a=1,2} V^{a} h \delta _{a} h^{-1},
	\end{equation*}
	where $V^a \in \mathbb{C}$. %
	%p5.2 #&#
	\begin{prop}
		\label{prop5.2}
		For the conformally rescaled Laplace operator on a noncommutative 2-torus
		the metric functional reads
		\begin{equation*}
			\mathcal{g}^{\Delta _{h}}(V_{h},W_{h}) = \mathcal{W}\left ( V_{h} W_{h}
			\Delta _{h}^{-2} \right ) = \pi \tau (h^{4}) V^{a} W^{b} \delta _{ab},
		\end{equation*}
		whereas the spectral Einstein functional and its density vanish identically
		\begin{equation*}
			\mathcal{G}^{\Delta _{h}}(V_{h},W_{h}) = \mathcal{W}\left ( V_{h} W_{h}
			\Delta _{h}^{-1} \right ) = 0.
		\end{equation*}
	\end{prop}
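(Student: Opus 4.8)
The plan is to exploit the conjugation structure to collapse everything onto the \emph{flat} Laplacian, working in Connes' symbol calculus with $\sigma(\delta_a)=\xi_a$ (so $\sigma(\Delta)=\|\xi\|^2$), composition rule $\sigma(PQ)=\sum_\alpha\tfrac1{\alpha!}\pp_\xi^\alpha\sigma(P)\,\delta^\alpha\sigma(Q)$, and $\wres(P)=\tau\bigl(\wresd(P)\bigr)$ with $\wresd(P)=\int_{\|\xi\|=1}\sigma_{-2}(P)$. Reading every $h^{\pm1}$ as a left multiplication, the conjugated factors telescope: since $(h\delta_a h^{-1})(h\delta_b h^{-1})=h\,\delta_a\delta_b\,h^{-1}$ and $\Delta_h^{-1}=h\Delta^{-1}h$ (modulo smoothing, which does not affect the order $-2$ symbol), one obtains the exact operator identities
\begin{equation*}
V_hW_h\,\Delta_h^{-1}=V^aW^b\,h\,\delta_a\delta_b\,\Delta^{-1}\,h,\qquad
V_hW_h\,\Delta_h^{-2}=V^aW^b\,h\,\delta_a\delta_b\,\Delta^{-1}\,h^2\,\Delta^{-1}\,h.
\end{equation*}
Crucially, the flat $\Delta=\delta_1^2+\delta_2^2$ has constant symbol $\|\xi\|^2$, so $\sigma(\Delta^{-1})=\|\xi\|^{-2}$ with no lower-order corrections and $\sigma(\delta_a\delta_b\Delta^{-1})=\xi_a\xi_b\|\xi\|^{-2}$ is \emph{exactly} homogeneous of order zero.

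For the metric functional I would simply read off the principal (top-order) symbol of the second identity. Since $-2$ is already the top order, composition corrections are strictly lower and play no role, so $\sigma_{-2}(V_hW_h\Delta_h^{-2})=V^aW^b\,h^4\,\xi_a\xi_b\|\xi\|^{-4}$. Integrating over $S^1$ with $\int_{\|\xi\|=1}\xi_a\xi_b=\pi\delta_{ab}$ and applying $\tau$ yields $\pi\,\tau(h^4)\,V^aW^b\delta_{ab}$, exactly as stated.

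For the Einstein functional the vanishing of the trace is almost immediate: by cyclicity of $\wres$ the first identity gives $\wres(V_hW_h\Delta_h^{-1})=V^aW^b\wres(h^2\,\delta_a\delta_b\Delta^{-1})$, and as $h^2\xi_a\xi_b\|\xi\|^{-2}$ is exactly order zero it carries no order $-2$ part, so the residue is zero. The assertion that the \emph{density} vanishes is stronger and is where the real work lies, since cyclicity is no longer at my disposal. Here I would compute $\sigma_{-2}(h\,\delta_a\delta_b\Delta^{-1}\,h)$ head-on: the left multiplication by $h$ is annihilated by $\pp_\xi$, so the unique order $-2$ contribution comes from the two $\xi$-derivatives landing on the right $h$,
\begin{equation*}
\sigma_{-2}\bigl(h\,\delta_a\delta_b\Delta^{-1}\,h\bigr)=\tfrac12\,h\sum_{c,d}\bigl[\pp_{\xi_c}\pp_{\xi_d}\bigl(\xi_a\xi_b\|\xi\|^{-2}\bigr)\bigr]\,\delta_c\delta_d(h).
\end{equation*}

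The hard part, and the true heart of the statement, is then the purely angular identity
\begin{equation*}
I_{abcd}:=\int_{\|\xi\|=1}\pp_{\xi_c}\pp_{\xi_d}\bigl(\xi_a\xi_b\|\xi\|^{-2}\bigr)=0,\qquad a,b,c,d\in\{1,2\}.
\end{equation*}
I would verify it by expanding the second derivative into its four homogeneous pieces and integrating each over the circle with $\int\xi_p\xi_q=\pi\delta_{pq}$ and $\int\xi_p\xi_q\xi_r\xi_s=\tfrac{\pi}{4}(\delta_{pq}\delta_{rs}+\delta_{pr}\delta_{qs}+\delta_{ps}\delta_{qr})$; the four contributions cancel in two pairs. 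Granting $I_{abcd}=0$, the integrated density $\tfrac12\,h\sum_{c,d}I_{abcd}\,\delta_c\delta_d(h)$ is identically zero in $\cA$, so the Einstein density — and hence the functional — vanishes. I expect this angular cancellation to be the crux: it is the precise two-dimensional incarnation of $\mathrm{Ric}=\tfrac12 Rg$, the classical identity that makes the Einstein tensor vanish on every surface, here surviving the noncommutative deformation at the level of the Wodzicki density itself.
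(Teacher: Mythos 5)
Your proposal is correct. For the metric functional and for the vanishing of the Einstein \emph{residue} it is essentially the paper's own argument: the same telescoping identities $V_hW_h\Delta_h^{-1}=V^aW^b\,h\,\delta_a\delta_b\,\Delta^{-1}h$ and $V_hW_h\Delta_h^{-2}=V^aW^b\,h\,\delta_a\delta_b\,\Delta^{-1}h^2\Delta^{-1}h$, the principal-symbol evaluation $h^4\xi_a\xi_b\|\xi\|^{-4}$ integrated with $\int_{S^1}\xi_a\xi_b=\pi\delta_{ab}$, and then cyclicity of $\wres$ together with the observation that $\delta_a\delta_b\Delta^{-1}$ has an exactly order-zero symbol. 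Where you genuinely depart from the paper is the density claim. The paper never computes $\sigma_{-2}\bigl(h\,\delta_a\delta_b\Delta^{-1}h\bigr)$; it deduces the vanishing of the density indirectly, by noting (in the remark following the proof) that the localized residue $\wres(fV_hW_h\Delta_h^{-1})$ vanishes for every $f$ in the algebra by the same cyclicity argument, so the density is killed by the faithful trace $\tau$. You instead compute the density head-on and reduce it to the angular identity $\int_{\|\xi\|=1}\partial_{\xi_c}\partial_{\xi_d}\bigl(\xi_a\xi_b\|\xi\|^{-2}\bigr)=0$, which is correct: it is the $n=2$ instance of the standard fact that the spherical integral of a $\xi$-derivative of a function homogeneous of degree $-(n-1)$ vanishes (and it fails for $n=4$, consistently with the nonzero Einstein functional of the 4-torus). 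Your route is longer but more informative: it exhibits the order $-2$ symbol explicitly as $\tfrac12\,h\sum_{c,d}I_{abcd}\,\delta_c\delta_d(h)$ (up to the overall sign coming from the $(-i)^{|\alpha|}$ in the paper's composition formula, which is immaterial here) and isolates exactly where two-dimensionality enters, whereas the paper's route is shorter because cyclicity lets it bypass the second-order composition term altogether.
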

	\begin{proof}
		The metric functional is straightforwardly computed
		%
		%e5.3 #&#
		\begin{equation}
			\begin{aligned}
				\mathcal{g}^{\Delta _{h}}(V_{h},W_{h}) :&= \mathcal{W}\left ( (h V^{a}
				\delta _{a} h^{-1}) (h W^{b} \delta _{b} h^{-1}) ( h^{-1} \Delta h^{-1}
				)^{-2}\right )
				\\
				&= \mathcal{W}\left ( h V^{a} W^{b} \delta _{a} \delta _{b} \Delta ^{-1}
				h^{2} \Delta ^{-1} h \right )
				\\
				&= \pi \tau \left ( h^{4} V^{a} W^{b} \delta _{ab} \right ) = \pi
				\tau (h^{4}) V^{a} W^{b} \delta _{ab}.
			\end{aligned}
			\label{eq5.3}
		\end{equation}
		Note that the components of the noncommutative metric tensor functional
		scale as $h^{4}$ as in the classical situation.
		
		Next, we compute the Einstein functional,
		%
		%e5.4 #&#
		\begin{equation}
			\begin{aligned}
				\mathcal{G}^{\Delta _{h}}(V_{h},W_{h}) :&= \mathcal{W}\left ( (h V^{a}
				\delta _{a} h^{-1}) (h W^{b} \delta _{b} h^{-1}) ( h^{-1} \Delta h^{-1}
				)^{-1}\right )
				\\
				& = \mathcal{W}\left ( h V^{a} W^{b} \delta _{a} \delta _{b}
				\Delta ^{-1} h \right ) = \mathcal{W}\left ( h^{2} V^{a} W^{b}
				\delta _{a} \delta _{b} \Delta ^{-1} \right ).
			\end{aligned}
			\label{eq5.4}
		\end{equation}
		Since $\Delta $ is the standard (flat) Laplace operator and $V,W$ are just
		$\mathbb{C}$-linear combinations of the derivations,
		$V^{a} W^{b} \delta _{a} \delta _{b} \Delta ^{-1}$ is an operator that
		has exclusively symbol of order $0$. Since its symbol of order $-2$ vanishes,
		its product with any element from the algebra, like $h^{2}$, has the same
		property, and thus the Wodzicki residue of it vanishes.
	\end{proof}
	Note that by using the same argument and the trace property of the Wodzicki
	residue we also show that for any ${\mathcal a} \in {\mathcal A}$ the localised
	functional vanishes
	\begin{equation*}
		{\mathcal G}^{\Delta _{h}}({\mathcal a} ,V,W) = \mathcal{W}\left ( {
			\mathcal a} V W \Delta ^{-1}_{h} \right ) = 0
	\end{equation*}
	(here ${\mathcal a} V$, in general, does not implement an algebra derivation).
	Thus, the conformally rescaled geometry of the noncommutative 2-torus has
	indeed the same property as a 2-dimensional manifold - its Einstein tensor
	vanishes.
	
	We finish this example by reiterating that the analysis does not change
	if $h$ is taken from the larger algebra $\hat{{\mathcal A}}$.
	
	%s5.1.2 #&#
	\subsubsection{The metric and Einstein functionals for the conformally rescaled Laplacian on a noncommutative 4-torus}
	%%LEAP%%%\label{sec5.1.2}
	\label{512}

	As the analogue of the conformally rescaled Laplace operator over the noncommutative
	4-torus (acting on
	${\mathcal H}= L^{2}(\mathbb{T}^{2}_{\theta}, \tau )$) we simply take,
	%
	%e5.5 #&#
	\begin{equation}
		\Delta _{h} = \sum _{a=1}^{4} \chi ^{-1}\cdot \delta _{a} \cdot \chi
		\cdot \delta _{a} \cdot \chi ^{-1},
		\label{eq5.5}
	\end{equation}
	where $\chi = h^{2}$ is a positive element of the algebra
	$C^{\infty}(T^{4}_{\theta})$ and $\delta _{a}$ are the standard derivations
	extended as operators on a dense subspace of
	$L^{2}(T^{4}_{\theta},\tau )$. The deformation parameter $\theta $ is here,
	in fact, a matrix (such that the center of the algebra is
	$\mathbb{C}$). Note that the operator $\Delta _{h}$ is again a noncommutative
	generalisation of an operator that classically (for $\theta =0$) is unitarily
	equivalent to a Laplace operator for the conformally rescaled metric on
	the four-torus.
	
	Similarly, as in the case of 2-dimensional noncommutative torus we take
	two arbitrary vector fields $V,W$ understood as $h$-rescaled linear combinations
	of derivations,
	\begin{equation*}
		V_{h} = V^{a} \chi \delta _{a} \chi ^{-1}, \qquad W_{h} = W^{b} \chi
		\delta _{b} \chi ^{-1}.
	\end{equation*}
	%
	%p5.3 #&#
	\begin{prop}
		\label{prop5.3}
		The spectral metric and Einstein functionals on derivations for a conformally
		rescaled Laplace operator over the noncommutative 4-torus are, respectively,
		%
		%e5.6 #&#
		\begin{equation}
			\begin{aligned}
				{\mathcal g}^{{\Delta _{h}}}(V_{h},W_{h}) = & \, 2\pi ^{2} \tau (
				\chi ^{3}) V^{a} W^{b} \delta _{ab},
				\\
				{\mathcal G}^{{\Delta _{h}}}(V_{h},W_{h}) = & \, 2\pi ^{2} \tau
				\biggl( -\frac{1}{24} \chi (V^{a} \delta _{a} \chi ) \chi ^{-1} (W^{b}
				\delta _{b} \chi ) -\frac{1}{24} \chi (W^{a} \delta _{a} \chi ) \chi ^{-1}
				(V^{b} \delta _{b} \chi )
				\\
				& +\frac{5}{24} (V^{a} \delta _{a} \chi ) \chi ^{-1} (W^{b} \delta _{b}
				\chi ) \chi +\frac{5}{24}(W^{a} \delta _{a} \chi ) \chi ^{-1} (V^{b}
				\delta _{b} \chi ) \chi
				\\
				& -\frac{1}{24} (V^{a} \delta _{a} \chi ) (W^{b} \delta _{b} \chi ) -
				\frac{1}{24} (W^{a} \delta _{a} \chi ) (V^{b} \delta _{b} \chi )
				\\
				& - \frac{1}{3} V^{a} W^{b} (\delta _{ab} \chi ) \chi + \frac{1}{6}
				\chi V^{a} W^{b} (\delta _{ab} \chi )
				\\
				&+ \biggl( - \frac{1}{24} (\delta _{a} \chi ) \chi ^{-1} (\delta _{a}
				\chi ) \chi - \frac{1}{24} \chi (\delta _{a} \chi ) \chi ^{-1} (
				\delta _{a} \chi )
				\\
				&\qquad - \frac{1}{24} (\delta _{a} \chi ) (\delta _{a} \chi ) +
				\frac{1}{12} \chi (\Delta \chi ) + \frac{1}{12} (\Delta \chi ) \chi
				\biggr) V^{b} W^{b} \biggr).
			\end{aligned}
			\label{eq5.6}
		\end{equation}
	\end{prop}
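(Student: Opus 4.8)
The plan is to carry out the entire computation inside Connes' symbol calculus over $\cA=C^\infty(\IT^4_\theta)$, in which a derivation $\delta_a$ has symbol $\xi_a$, an algebra element has an order-zero symbol, and operator composition follows the noncommutative rule \eqref{composition}, pairing $\xi$-derivatives of the left symbol against the derivations $\delta_a$ applied to the algebra-valued right symbol. First I would expand $\sigma(\Delta_h)$ for $\Delta_h=\sum_a\chi^{-1}\delta_a\chi\delta_a\chi^{-1}$ into its homogeneous parts: a direct Leibniz computation gives the principal part $\mathfrak a_2=\chi^{-1}\|\xi\|^2$, an order-one part $\sum_a\delta_a(\chi^{-1})\,\xi_a$, and an order-zero part quadratic in the first derivatives of $\chi^{\pm1}$. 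Feeding this into the inversion formula \eqref{SymPinverse} yields $\sigma(\Delta_h^{-1})$ with leading term $\mathfrak b_2=\chi\|\xi\|^{-2}$ and two lower pieces $\mathfrak b_3,\mathfrak b_4$; iterating \eqref{composition} then produces $\sigma(\Delta_h^{-2})$ and $\sigma(\Delta_h^{-3})$ down to orders $-6$ and $-8$ respectively. In parallel I would record $\sigma(V_h)=V^a\xi_a+V^a\chi\,\delta_a(\chi^{-1})$, and likewise for $W_h$, and compose these to obtain $\sigma(V_hW_h)$ in orders $2,1,0$.

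For the metric functional a single order count settles the matter. Since $\sigma(\Delta_h^{-3})$ begins in order $-6$ while $\sigma(V_hW_h)$ begins in order $2$, the identity $j+k-|\alpha|=-4$ governing the $-4$-homogeneous part of the composition forces $j=2$, $k=-6$, $|\alpha|=0$: only the two principal symbols pair, with no $\xi$-derivative correction. Hence $\sigma_{-4}(V_hW_h\Delta_h^{-3})=V^aW^b\xi_a\xi_b\,\chi^3\|\xi\|^{-6}$, and integrating over $S^3$ using the standard average $\int_{\|\xi\|=1}\xi_a\xi_b\propto\delta_{ab}$ and applying $\tau$ gives the stated closed form proportional to $\tau(\chi^3)\,V^aW^b\delta_{ab}$, with no derivative corrections surviving, exactly as in the flat manifold case of Theorem~\ref{metricthm}.

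The Einstein functional is where the real work lies. For $\wres(V_hW_h\Delta_h^{-2})$ the residue order $-4$ now receives contributions from \emph{every} pairing of the orders $2,1,0$ of $\sigma(V_hW_h)$ with the orders $-4,-5,-6$ of $\sigma(\Delta_h^{-2})$, including the first- and second-order $\xi$-derivative terms of \eqref{composition}. I would assemble all of these, keeping strict track of the noncommutative ordering of the factors $\chi$, $\chi^{-1}$, $\delta_a\chi$, $\delta_a\delta_b\chi$ and $\Delta\chi=\sum_a\delta_a^2\chi$; here, unlike the manifold case, there is no normal-coordinate device that kills the subleading pieces, so each must be retained. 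After substitution the angular integrals reduce to the two averages $\int_{\|\xi\|=1}\xi_a\xi_b$ and $\int_{\|\xi\|=1}\xi_a\xi_b\xi_c\xi_d=\tfrac{1}{24}v_{3}(\delta_{ab}\delta_{cd}+\delta_{ac}\delta_{bd}+\delta_{ad}\delta_{bc})$, turning the result into a sum of traces of ordered words in $\chi^{\pm1}$ and their derivatives.

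The final step is to bring this sum to the compact form displayed in the statement using the trace properties: cyclicity of $\tau$ together with $\tau(\delta_a(\,\cdot\,))=0$ (integration by parts) rewrites and combines the many monomials, and the identity $\delta_a(\chi^{-1})=-\chi^{-1}(\delta_a\chi)\chi^{-1}$ converts every $\chi^{-1}$-derivative into the three-factor structure $(\delta_a\chi)\chi^{-1}(\delta_b\chi)$ appearing in the answer. The main obstacle is precisely this bookkeeping: controlling the proliferation of ordered monomials across all the composition terms and verifying that the integration-by-parts reductions collapse them to the stated expression, since at no intermediate stage may one commute $\chi$ past $\delta_a\chi$. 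A useful internal consistency check is that the coefficient of $V^aW^b\delta_{ab}$ in $\mathcal{G}^{\Delta_h}$ must assemble into the five $V^bW^b$-terms exhibited, and that setting $\chi$ central (the commutative limit) must reproduce the vanishing Einstein tensor of a flat torus.
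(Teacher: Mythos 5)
Your outline follows exactly the route the paper intends: the paper in fact omits this proof entirely, calling it ``a straightforward and tedious computation of the respective symbols and the integration over the cosphere,'' and your plan --- expand $\sigma(\Delta_h)$, invert via \eqref{SymPinverse}, take powers via \eqref{composition}, compose with $\sigma(V_hW_h)$, integrate over $S^3$, and reduce with cyclicity of $\tau$ and $\tau\circ\delta_a=0$ --- is precisely that computation, with the correct observation that only the principal symbols survive for the metric functional while all orders contribute to the Einstein one. One concrete error, however, lies in your final consistency check: making $\chi$ central gives the \emph{commutative conformally rescaled} $4$-torus, which is conformally flat but not flat, and its Einstein tensor does not vanish (vanishing of $G$ under conformal rescaling is special to dimension $2$). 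The correct benchmark is the one the paper records in the remark immediately after the proposition, namely that the commutative limit of the density equals $\tfrac{2\pi^2}{6}\sqrt{g}\,G_{ab}V^aW^b$ for the conformally rescaled metric; using ``must vanish'' as your check would lead you to reject a correct answer. (Only for \emph{constant} $\chi$ does every term die, which is a much weaker test.)
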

	\noindent
	We skip the proof, which is a straightforward and tedious computation of
	the respective symbols and the integration over the cosphere. Note that
	the above expression can be further simplified using the trace property
	of $\tau $. Let us also note that the expression has a well-defined commutative
	limit, in which case the spectral Einstein functional density becomes as
	follows:
	%
	%e5.7 #&#
	\begin{equation}
		\begin{aligned}
			\mathcal{w}(VW{\Delta _{h}}^{-2}) = & 2\pi ^{2} \biggl(\frac{1}{4} V^{a}
			W^{b} (\delta _{a} \chi ) (\delta _{b} \chi ) - \frac{1}{6} V^{a} W^{b}
			(\delta _{ab} \chi ) \chi
			\\
			&- \frac{1}{8} (\delta _{a} \chi ) (\delta _{a} \chi ) (V^{b} W^{b}) +
			\frac{1}{6} (\Delta \chi ) \chi (V^{b} W^{b}) \biggr)\xch{,}{.}
		\end{aligned}
		\label{eq5.7}
	\end{equation}
	and is exactly equal to:
	\begin{equation*}
		\frac{2 \pi ^{2}}{6} \sqrt{g} \, G_{ab} V^{a} W^{b}.
	\end{equation*}
	
	%s5.2 #&#
	\subsection{Spectral Einstein and metric tensor for spectral triples}
	%%LEAP%%%\label{sec5.2}
	\label{52}
	
	Let $({\mathcal A}, D, {\mathcal H})$ be a $n$-summable spectral triple and
	$\Omega ^{1}_{D}({\mathcal A})$ be the ${\mathcal A}$ bimodule of one-forms generated
	by ${\mathcal A}$ and $[D, {\mathcal A}]$, which by definition consists
	of bounded operators on ${\mathcal H}$. We assume that there exists a generalised
	algebra of pseudodifferential operators which contains
	${\mathcal A}$, $D$, $|D|^{\ell}$ for $\ell \in \mathbb{Z}$, and there
	exists a tracial state $\mathcal{W}$ on it, still called a noncommutative
	residue, which identically vanishes on $T |D|^{-k}$ for any $k>n$ and a
	zero-order operator $T$ (an operator in the algebra generated by
	${\mathcal A}$ and $\Omega^{1}_D({\mathcal A})$). We propose the following
	definition of metric and Einstein functionals for spectral triples.
	%
	%d5.4 #&#
	\begin{defn}%
		%%LEAP%%%\label{defn5.4}
		\label{Dwres}
		The spectral metric functional on differential forms is
		\begin{equation*}
			\mathcal{g}_{D}(v,w) = \mathcal{W}(v w|D|^{-n}),
		\end{equation*}
		where $v,w \in \Omega ^{1}_{D}({\mathcal A})$, and the Einstein functional
		is
		\begin{equation*}
			\mathcal{G}_{D}(v,w) = \mathcal{W}(v \{ D, w \} D |D|^{-n}).
		\end{equation*}
	\end{defn}
	Note that since $\Omega ^{1}_{D}({\mathcal A})$ is a bimodule over
	${\mathcal A}$ these functionals are already localised, as one can replace
	$v$ by $av$. A simple consequence of the properties of $\mathcal{W}$ on the generalised pseudodifferential calculus for regular spectral triples
	are the following linearities for the spectral metric functional
	over one-forms,
	\begin{equation*}
		{\mathcal g}_{D} (vb,w) = {\mathcal g}_{D} (v, bw), \qquad {
			\mathcal g}_{D} (av,w) = {\mathcal g}_{D} (v, wa).
	\end{equation*}
	However, unlike in the classical case it is not entirely obvious whether
	any form of linearity over ${\mathcal A}$ in the second entry for the Einstein functional will hold in general. To ensure this property, we propose the following requirement on a class of spectral triples.
	
	%d5.5 #&#
	\begin{defn}
		\label{defn5.5}
		We say that a spectral triple with a trace on the generalised algebra of
		pseudodifferential operators is spectrally closed if for any zero-order
		operator $T$ (as defined earlier) the following holds:
		\begin{equation*}
			\mathcal{W}(T D |D|^{-n}) = 0.
		\end{equation*}
	\end{defn}
	%
	%l5.6 #&#
	\begin{lem}
		\label{lem5.6}
		The classical spectral triple over a closed oriented spin-c manifold
		$M$ of dimension $n=2m$ is spectrally closed in the above sense.
	\end{lem}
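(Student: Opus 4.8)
The plan is to prove the stronger, pointwise statement that the Wodzicki residue \emph{density} of $Q:=T D|D|^{-n}$ vanishes at every point, and then integrate. First I would pin down what $T$ is: since it lies in the algebra generated by $\cA$ and $\Omega^1_D(\cA)$, and every generator of $\Omega^1_D(\cA)$ has the form $a[D,b]=i\,a\,\gamma^j(\pp_j b)$, i.e.\ Clifford multiplication by a one-form, the operator $T$ is a \emph{zero-order multiplication operator}: a smooth endomorphism of the spinor bundle whose symbol $\sigma(T)=T(x)$ carries no $\xi$-dependence. Hence $Q$ has order $1-n$, and by Wodzicki's theorem its residue density $x\mapsto\int_{\|\xi\|=1}\tr\,\sigma_{-n}(Q)(x,\xi)$ is an intrinsic $1$-density. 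It therefore suffices to evaluate it, at an arbitrary fixed point $x_0$, in geodesic normal coordinates centred at $x_0$ together with a radial (Fock--Schwinger) trivialization of the spin$_c$ bundle in which the full connection vanishes at $x_0$.

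Next I would expand $\sigma_{-n}(Q)$ with the symbol-composition formula already used in the proof of Theorem~\ref{einsteinthm}, applied to $Q=(TD)\cdot|D|^{-n}$. Because $\sigma(T)$ is $\xi$-independent, one has \emph{exactly} $\sigma(TD)=T\,\sigma(D)=T\big(\sigma_1(D)+\sigma_0(D)\big)$, with $\sigma_1(D)=-\gamma^k\xi_k$ and $\sigma_0(D)$ the connection (subprincipal) term. Writing $c_{n+j}:=\sigma_{-n-j}(|D|^{-n})$, a short bookkeeping shows that only three terms can reach homogeneity order $-n$:
\[
\sigma_{-n}(Q)=\underbrace{T\,\sigma_1(D)\,c_{n+1}}_{\text{(I)}}\;+\;\underbrace{T\,\sigma_0(D)\,c_{n}}_{\text{(II)}}\;-\;\underbrace{i\,\pp_{\xi_j}\!\big(T\,\sigma_1(D)\big)\,\pp_{x_j}c_{n}}_{\text{(III)}} .
\]

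I would then annihilate all three terms at $x_0$. For (II): in the chosen trivialization the connection vanishes at $x_0$, so the subprincipal symbol satisfies $\sigma_0(D)(x_0)=0$. For (III): the leading symbol is $c_n=(g^{ab}\xi_a\xi_b)^{-m}\,\mathrm{Id}$, a scalar multiple of the identity, and in normal coordinates $\pp_x g^{ab}(x_0)=0$, whence $\pp_x c_n(x_0)=0$. For (I): the component $c_{n+1}$ is the $\xi$-odd first subleading symbol of $|D|^{-n}=(D^2)^{-m}$; by the parametrix recursion it is produced entirely from the order-$1$ symbol $\sigma_1(D^2)$ and from $\pp_x$ of the leading symbol, and $\sigma_1(D^2)$ is built from first derivatives of the metric and from the connection, all of which vanish at $x_0$. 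This is exactly the content of the explicit expansion \eqref{symD-n}, every term of whose $\mathfrak c_{2m+1}$ carries a factor of $x$; thus $c_{n+1}(x_0)=0$. Hence $\sigma_{-n}(Q)(x_0,\cdot)\equiv 0$, the residue density vanishes at $x_0$, and as $x_0$ was arbitrary we obtain $\wres(Q)=0$, which is spectral closedness.

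I expect the main obstacle to be the rigorous justification of step (I): that the first subleading symbol $c_{n+1}$ of $|D|^{-n}$ vanishes at the centre of normal coordinates to all orders, not merely in the leading curvature expansion recorded in \eqref{symD-n}. The clean way to see it is that $|D|^{-n}$ lies in the even--even class of pseudodifferential operators---those whose homogeneous symbol of order $d-j$ has parity $(-1)^{d-j}$ in $\xi$---so that its order $-n-1$ component is assembled by the recursion solely from data ($\sigma_1(D^2)$ and $x$-derivatives of lower symbols) that is flat to first order at $x_0$ in the chosen gauge. A secondary point worth stating explicitly is that the single factor of $D$, which makes $Q$ of \emph{odd} order $1-n$, is precisely what forces the residue to read this first subleading symbol; this is in contrast with the metric and Einstein functionals of Sections~2--3, where the two derivative factors push the relevant symbol down to an order that genuinely survives and yields $g$ and $G$.
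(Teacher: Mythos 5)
Your proof is correct and follows essentially the same route as the paper's: both evaluate the order $-n$ symbol of $D|D|^{-n}$ (equivalently of $TD|D|^{-n}$, the factor $T$ being harmless since its symbol is $\xi$-independent) at the centre of normal coordinates using the expansion \eqref{symD-n} and observe that it vanishes there. Your concern about justifying $c_{n+1}(x_0)=0$ ``to all orders'' is unnecessary: the value at $x_0$ only requires the symbol modulo $o({\bf x})$, which is exactly what \eqref{symD-n} records.
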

	\begin{proof}
		Computing the symbol of $D |D|^{-n} = D D^{-2m}$ using the expansion \reftext{\eqref{symD-n}} we see that it vanishes identically at ${\mathbf{x}}=0$. Since
		$T$ is an operator of order zero (does not depend on $\xi $) the product
		$T D D^{-2m}$ also has a vanishing symbol of order $-n$, hence both the
		density of the Wodzicki residue and the residue itself vanish identically.
	\end{proof}
	%
	%t5.7 #&#
	\begin{thm}
		\label{thm5.7}
		If a spectral triple with a noncommutative residue over the generalised
		pseudodifferential calculus is spectrally closed, then the localised Einstein
		functional satisfies
		\begin{equation*}
			\mathcal{G}_{D}(v{\mathcal{b}} ,w) = \mathcal{G}_{D}(v,{\mathcal b}w),
		\end{equation*}
		for ${\mathcal b} \in {\mathcal A}$.
	\end{thm}

	\begin{proof}
		We compute
		\begin{equation*}
			\begin{aligned}
				\mathcal{G}_{D}(v, {\mathcal b}w) &= \mathcal{W}\, \bigl(v \{ D, {
					\mathcal b} w \} D |D|^{-n}\bigr)
				\\
				& = \mathcal{W}\, \bigl(v ([D, {\mathcal b}] w + {\mathcal b}\{ D,w
				\} )D |D|^{-n})\bigr)
				\\
				&= \mathcal{W}\, \bigl(v {\mathcal b} \{D ,w \} D |D|^{-n}\bigr) =
				\mathcal{G}_{D}(v{\mathcal b}, w),
			\end{aligned}
		\end{equation*}
		since the first term in the second line contains a product of three one-forms
		and therefore is a 0-th order term, so by the assumption of spectral closedness
		it vanishes. \end{proof}
	%
	%r5.8 #&#
	\begin{rem}
		\label{rem5.8}
		Note that in the case of spectral triple over a manifold, since one-forms
		commute with ${\mathcal a},{\mathcal b} \in {\mathcal A}$, the Einstein functional has the same linearities over the algebra as a tensor.
	\end{rem}
	
	Next, we present two important situations where the assumptions before
	\reftext{Definition~\ref{Dwres}} are satisfied. The first is based on the algebra
	of pseudodifferential operators over noncommutative tori
	\cite{Co80,CoTr11} and the second on the abstract algebra of pseudodifferential
	operators for regular $n$-summable spectral triples \cite{CoMo95}.
	
	%s5.2.1 #&#
	\subsubsection{The metric and Einstein functionals for the conformally rescaled spectral triple on a noncommutative tori}
	\label{sec5.2.1}

	We begin with the spectral triple on the noncommutative $n$-torus,
	$({\mathcal A}, {\mathcal H}, D_{k})$, where ${\mathcal A}$ is the
	$\delta _{a}$-smooth subalgebra and $D_{k}:=kDk$ is the conformal rescaling
	of the standard (flat) Dirac operator
	$D=\sum _{a} \gamma _{a} \delta _{a}$ with $\gamma _{a}$ as in Section~\ref{13}.
	As argued in \cite{DaSi15} the conformal factor $k>0$ has to be taken from
	the copy ${\mathcal A}^{o}$ of ${\mathcal A}$ in its commutant. The bimodule of one-forms, generated by the commutators $[D_{k},{\mathcal a}]$, ${\mathcal a} \in {\mathcal A}$, is a free left module generated by $ k^{2} \gamma ^{j}$. With assumptions as in Section~\ref{51}
	using the extension of the calculus in \cite{Co80} to
	$\hat{{\mathcal A}}$-valued symbols and of the analogue of the Wodzicki
	residue, the calculations are very much similar to the case of functionals
	for the Laplace operator on noncommutative tori. We provide them explicitly
	for the two lowest even dimensional cases.
	
	\begin{Example} \label{exmp1} Functionals of the conformally rescaled spectral triple
		on the noncommutative 2-torus.  %%\label{521}%
		%\end{Example}
		
		As argued in \cite{DaSi15} it is possible to construct a usual (untwisted)
		spectral triple with a conformally rescaled Dirac operator, however, the
		conformal factor $k>0$ has to be taken from the commutant of the algebra.
		With ${\mathcal A}= C^{\infty}(\mathbb{T}^{2}_{\theta})$ and
		${\mathcal A}^{o}$ a copy of ${\mathcal A}$ in the commutant of
		${\mathcal A}$, such a spectral triple is given by
		$ ({\mathcal A}, D_{k} = k D k, {\mathcal H}\otimes \mathbb{C}^{2})$, where
		$D = D_{1}$ and $k \in {\mathcal A}^{o}$ is the standard flat Dirac operator,
		\begin{equation*}
			D = \left (
			\begin{matrix}
				0 & \delta _{2} - i \delta _{1}
				\\
				\delta _{2} + i \delta _{1} & 0
			\end{matrix}
			\right ).
		\end{equation*}
		In fact $D_{k}$ is an analogue of the classical Dirac operator for the
		flat metric rescaled conformally and unitarily transformed to act on Hilbert
		space with the volume measure of the flat metric.
		
		The space of one-forms is a bimodule over ${\mathcal A}$ generated by all
		commutators $[D_{k}, {\mathcal a} ]$,
		${\mathcal a} \in {\mathcal A}$ and it can be shown that it is a free left
		module generated by $ k^{2} \sigma ^{j}$, $j=1,2$, where
		$\sigma ^{j}$ are Pauli matrices.
		
		First, we show,
		%
		%l5.9 #&#
		\begin{lem}
			\label{lem5.9}
			The conformally rescaled spectral triple on the noncommutative 2-torus,
			$ (\mathcal{A}, D_k = k D k, \mathcal{H} \otimes \mathbb{C}^2)$,
			is spectrally closed.
		\end{lem}
		\begin{proof}
			We have
			\begin{equation*}
				\sigma _{-2}(D_{k}^{-1}) = \sigma _{-2}(k^{-1} D^{-1} k^{-1}) =
				\sigma ^{p} (\frac{\delta _{pq}}{||\xi ||^{2}} - 2
				\frac{\xi _{p} \xi _{q}}{||\xi ||^{4}} \bigr) (-k^{-2} \delta _{q} k
				\, k^{-1}).
			\end{equation*}
			Integrating it over the $||\xi ||=1$ cosphere gives identically zero, therefore
			the Wodzicki residue of any expression that is a product of a zero-order
			operator with $D_{k}^{-1}$ vanishes.
		\end{proof}
		Next, we have the following.
		%
		%p5.10 #&#
		\begin{prop}
			\label{prop5.10}
			For the conformally rescaled spectral triple over the noncommutative 2-torus
			the metric functional for $ v = k^{2} V^{a} \sigma ^{a}$ and
			$w = k^{2} W^{a} \sigma ^{a}$, $V^{a}, W^{a} \in {\mathcal A}$, reads
			\begin{equation*}
				{\mathcal g}_{D_{k}}(v,w) = \tau (V^{a} W^{a}),
			\end{equation*}\goodbreak\noindent
			whereas the spectral Einstein functional vanishes identically,
			\begin{equation*}
				{\mathcal G}_{D_{k}}(v,w) =0.
			\end{equation*}
		\end{prop}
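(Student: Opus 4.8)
The plan is to evaluate both functionals directly from Definition~\ref{Dwres} with the symbol calculus on $\IT^2_\theta$, using throughout the fact that the conformal factor $k$ lies in the commutant copy $\cA^o$: it therefore commutes with $V^a,W^a\in\cA$ and with the Pauli matrices, while the derivations $\delta_a$ still act on it nontrivially ($\delta_a k\neq 0$, exactly as recorded in the proof of spectral closedness). Writing $D_k=kDk$ and $D_k^{-1}=k^{-1}D^{-1}k^{-1}$ with $D$ the flat Dirac operator, I would feed the given forms $v=k^2V^a\sigma^a$ and $w=k^2W^a\sigma^a$ into the residues and extract the symbol of order $-2$.

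For the metric functional $\mathcal{g}_{D_k}(v,w)=\wres(vw\,|D_k|^{-2})$ I would compute the principal symbol of $|D_k|^{-2}=D_k^{-2}$. Since the flat $D^{-1}$ has the single homogeneous symbol $\sigma_{-1}(D^{-1})=-i\gamma^c\xi_c\,||\xi||^{-2}$, squaring $\sigma_{-1}(D_k^{-1})=-i\gamma^c\xi_c\,||\xi||^{-2}k^{-2}$ gives $\sigma_{-2}(D_k^{-2})=k^{-4}||\xi||^{-2}$ (up to the sign fixed by the convention). As $v$ and $w$ carry no $\xi$-dependence, the order $-2$ symbol of $vw\,|D_k|^{-2}$ is just $vw\,\sigma_{-2}(D_k^{-2})$, and the factor $k^4$ produced by $vw=k^4V^aW^b\sigma^a\sigma^b$ cancels the $k^{-4}$ exactly. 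Integrating over the cosphere $||\xi||=1$ and applying $\tr(\sigma^a\sigma^b)=2\delta^{ab}$ then yields $\tau(V^aW^a)$ up to the normalisation built into $\wres$.

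The Einstein functional is where the argument becomes conceptual. Using $\{D_k,w\}=D_kw+wD_k$ and $D_k|D_k|^{-2}=D_k^{-1}$, the defining expression splits as $\mathcal{G}_{D_k}(v,w)=\wres(vD_kwD_k^{-1})+\wres(vw)$, and the second term vanishes because $vw$ is a zero-order multiplication operator with no symbol of order $-2$. For the first term I would exploit the commutant property: since $k$ commutes with $w$, the inner factors collapse, $D_kwD_k^{-1}=kDk\,w\,k^{-1}D^{-1}k^{-1}=kD(kwk^{-1})D^{-1}k^{-1}=kDwD^{-1}k^{-1}$, after which the trace property of $\wres$ together with $k^{-1}vk=v$ strips the outer $k^{\pm1}$, giving the key reduction
\[ \mathcal{G}_{D_k}(v,w)=\wres\bigl(v\,D\,w\,D^{-1}\bigr) \]
with the \emph{flat} Dirac operator $D$.

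The proof then closes with the observation that this residue vanishes identically. Because $D$ and $D^{-1}$ are constant-coefficient operators, their symbols are the single pieces $i\gamma^c\xi_c$ and $-i\gamma^c\xi_c||\xi||^{-2}$, while $v$ and $w$ are $\xi$-independent; composing these, $DwD^{-1}$ acquires symbol components only of orders $0$ and $-1$, and left multiplication by the $\xi$-independent $v$ generates no further $\xi$-derivative corrections, so $vDwD^{-1}$ has no symbol of order $-2$ at all and its Wodzicki residue is zero. This is precisely the mechanism that produced the vanishing in the Laplacian case of Section~\ref{511}. The main obstacle is not a lengthy computation but getting this algebraic reduction right: one must track carefully which factors of $k$ commute through (those meeting $\cA$ and the matrices) and which do not (those meeting $D$, since $\delta_a k\neq0$), and invoke cyclicity of $\wres$ at exactly the right step.
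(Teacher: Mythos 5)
Your argument is correct and essentially reproduces the paper's own proof: both reduce the Einstein functional, via cyclicity of $\wres$ and the fact that $k\in\cA^o$ commutes with $v$ and $w$, to a residue of the form $\wres\bigl(v\,D\,(\cdot)\,D^{-1}\bigr)$ built from the \emph{flat} Dirac operator sandwiching $\xi$-independent zero-order factors, and then conclude by observing that the resulting total symbol has no component of order $-2$. The only cosmetic differences are that you split $\{D_k,w\}D_k^{-1}$ as $D_k w D_k^{-1}+w$ and carry general coefficients $W^a\in\cA$ through the symbol computation directly, whereas the paper splits off the commutator $[D_k,w]D_k^{-1}+2w$ and first reduces to the module generators $k^2\sigma^\ell$ by invoking spectral closedness.
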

		\begin{proof}
			The computation of the metric functional is straightforward. Next, since
			the spectral triple is spectrally closed, it suffices to compute the spectral
			Einstein functional over the one-forms that generate the bimodule
			$\Omega _{D_{k}}({\mathcal A})$,
			\begin{equation*}
				\mathcal{G}_{j\ell} = \mathcal{W}\bigl( k^{2} \sigma ^{j} \{ D_{k}, k^{2}
				\sigma ^{\ell }\} D_{k} D_{k}^{-2} \bigr).
			\end{equation*}
			The expression in curly brackets can be rewritten as
			\begin{equation*}
				\sigma ^{j} k^{2} [D_{k}, k^{2} \sigma ^{\ell }] k^{-1} D^{-1} k^{-1} +
				2 \sigma ^{j} \sigma ^{\ell }k^{4}.
			\end{equation*}
			Since the second term is of order zero, we only need to compute the symbol
			of order $-2$ of the first term, which can be rewritten as
			\begin{equation*}
				\sigma ^{j} k^{3} [D, k^{2} \sigma ^{\ell }] D^{-1} k^{-1} .
			\end{equation*}
			Then, using the tracial property of $\mathcal{W}$ we get
			\begin{equation*}
				\mathcal{G}_{j\ell} = \mathcal{W}\bigl( \sigma ^{j} k^{2} [D, k^{2}
				\sigma ^{\ell }] D^{-1} \bigr)= \mathcal{W}\bigl( \sigma ^{j} k^{2} [D,
				k^{2}] \sigma ^{\ell }D^{-1} \bigr) + \mathcal{W}\bigl( \sigma ^{j} k^{4}
				[D, \sigma ^{\ell }] D^{-1} \bigr) ,
			\end{equation*}
			which vanishes since both \xch{expressions have a}{expressions has a} vanishing symbol of order
			$-2$.
		\end{proof}
		It is rewarding to see that the conformally rescaled spectral triple of
		the noncommutative 2-torus shares indeed the same property with its commutative
		counterpart, namely, it has vanishing Einstein tensor.
	\end{Example}
	\begin{Example}\label{exmp2} The functionals for the conformally rescaled spectral triple
		on the noncommutative 4-torus.  %\label{522}
		%\end{Example}

		The situation is more complicated for the strictly irrational 4-dimensional
		non-commutative torus. Following the classical situation, given a positive
		invertible $k \in {\mathcal A}^{o}$ (which is in the commutant of
		${\mathcal A}$), we take $D_{k} = k D k$ as the conformally rescaled and
		unitarily transformed to act on
		$L^{2}(\mathbb{T}^{4}_{\theta}, \tau ) \otimes \mathbb{C}^{4}$, operator
		$D$. Let $v$ and $w$ be two arbitrary one-forms in the Clifford algebra
		generated by ${\mathcal A}$ and $[D, {\mathcal A}]$,
		\begin{equation*}
			v = k^{2} V^{a} \gamma ^{a}, \qquad W = k^{2} W^{a} \gamma ^{a},
		\end{equation*}
		where $\gamma ^{a}$, $a=1,\ldots,4$ are the standard gamma matrices,
		$\{ \gamma ^{a}, \gamma ^{b} \} = 2 \delta _{ab}$, and $V^{a},W^{a}$ are
		elements of ${\mathcal A}$ (which, of course, commute with $k$). First,
		we establish.
		%
		%l5.11 #&#
		\begin{lem}
			\label{lem5.11}
			The conformally rescaled spectral triple for a 4-dimensional noncommutative
			torus is spectrally closed.
		\end{lem}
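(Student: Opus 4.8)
The goal is to verify, for $n=4$ and $D=D_k=kDk$, the defining identity $\wres(T\,D_k|D_k|^{-4})=0$ for every zero-order operator $T$ in the algebra generated by $\cA$ and $\Omega^1_{D_k}(\cA)$, following the pattern of the two-torus lemma. Since $D_k$ is self-adjoint, $|D_k|^{-4}=D_k^{-4}$ and hence $D_k|D_k|^{-4}=D_k^{-3}$, an operator of order $-3$. As $T$ is of order zero its symbol is independent of $\xi$, so in the composition formula \eqref{composition} only the term with no $\xi$-derivative survives and $\sigma_{-4}(T\,D_k^{-3})=T\,\sigma_{-4}(D_k^{-3})$. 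Consequently
$$\wres(T\,D_k^{-3})=\tau^\otimes\Bigl(\tr\,T\!\int_{||\xi||=1}\sigma_{-4}(D_k^{-3})\,{\mathcal V}_\xi\Bigr),$$
so it suffices to prove that the cosphere integral $\int_{||\xi||=1}\sigma_{-4}(D_k^{-3})\,{\mathcal V}_\xi$ vanishes.

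To obtain that symbol I would start from the exact operator factorisation $D_k^{-3}=k^{-1}D^{-1}k^{-2}D^{-1}k^{-2}D^{-1}k^{-1}$, which follows from $D_k^{-1}=k^{-1}D^{-1}k^{-1}$. The flat parametrix has the clean single-term symbol $\sigma(D^{-1})=||\xi||^{-2}\gamma^a\xi_a$, with no lower-order corrections, because $D^2=\sum_a\delta_a^2$ has constant coefficients. Composing the seven factors through \eqref{composition}, and noting that the torus derivations $\delta_b$ annihilate the $\xi$-part $\gamma^a\xi_a||\xi||^{-2}$ so that all $\delta$-derivatives fall only on the conformal factors, I would collect every contribution of order $-4$. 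These are built from $k^{-1}$, the first derivatives $\delta_a k$ and the second derivatives $\delta_a\delta_b k$ (all elements of $\cA^o$), multiplied by products of an odd number of gamma matrices and by functions of $\xi$ homogeneous of degree $-4$; the Clifford structure forces only one- and three-gamma terms to appear.

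The last step is the angular integration over $S^{3}$. Terms odd in $\xi$ drop by parity, and the even terms are reduced by $\int_{S^3}\xi_a\xi_b\,{\mathcal V}_\xi=\tfrac{v_3}{4}\delta_{ab}$ and $\int_{S^3}\xi_a\xi_b\xi_c\xi_d\,{\mathcal V}_\xi=\tfrac{v_3}{24}(\delta_{ab}\delta_{cd}+\delta_{ac}\delta_{bd}+\delta_{ad}\delta_{bc})$. I expect the single-gamma contributions to assemble into a traceless combination of the type $||\xi||^2\delta_{pq}-4\xi_p\xi_q$ — the four-dimensional counterpart of the $||\xi||^2\delta_{pq}-2\xi_p\xi_q$ responsible for the cancellation on the two-torus — which integrates to zero over $S^3$, while the three-gamma contributions cancel once the symmetric sphere tensors are contracted against the antisymmetrised products $\gamma^a\gamma^b\gamma^c$.

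The main obstacle is precisely this bookkeeping. In contrast with the two-torus, where a single explicit term sufficed, the order $-4$ symbol of $D_k^{-3}$ contains many summands carrying up to two derivatives of the conformal factor and several distinct gamma structures, and one must organise the Clifford algebra together with the $S^3$ integrals so that each group of terms cancels separately. Conceptually the vanishing is forced by the same parity mechanism that makes $\sigma_{-n}(D\,D^{-2m})$ vanish at the base point in the commutative lemma above; the genuine work is to check that the noncommutative corrections, which carry the derivatives of $k$, integrate to zero rather than vanishing pointwise.
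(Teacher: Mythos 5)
Your reduction is sound up to the point where you claim it suffices to show that $\int_{||\xi||=1}\sigma_{-4}(D_k^{-3})\,{\mathcal V}_\xi$ vanishes: that statement is false, and this is precisely where the four--dimensional case departs from the two--torus. The paper computes this cosphere integral explicitly and finds
$$\int_{||\xi||=1}\sigma_{-4}(D_k^{-3})\,d\xi=\frac{i}{2}\,\gamma^a\Bigl(k^{-2}\bigl[k^{-2},\{k^{-1},\delta_a k\}\bigr]k^{-2}\Bigr),$$
which is a nonzero element of the algebra whenever $k$ fails to commute with its derivatives $\delta_a k$ --- the generic situation on $\IT^4_\theta$. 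So the parity/angular-integration mechanism that kills the integrated symbol pointwise on the two-torus does not kill everything here: a term carrying one derivative of the conformal factor survives the $S^3$ integration as a (conjugated) commutator. If you carried out the bookkeeping you describe, you would discover exactly this leftover term and your argument would stall, since it cannot be made to vanish by contracting symmetric sphere tensors against Clifford products.

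The missing idea is that the vanishing of $\wres(T D_k^{-3})$ is not a pointwise statement about the symbol but a consequence of the trace property of $\wres$ combined with the commutator structure of the surviving term. Any zero-order $T$ is built from gamma matrices, elements of $\cA$, and powers of $k\in\cA^o$, hence commutes with $k$; cyclicity of $\tau^{\otimes}$ (and of the matrix trace) then gives, with $Y=\{k^{-1},\delta_a k\}$,
$$\tau^{\otimes}\bigl(T\,k^{-2}[k^{-2},Y]k^{-2}\bigr)=\tau^{\otimes}\bigl(Tk^{-6}Y\bigr)-\tau^{\otimes}\bigl(Tk^{-6}Y\bigr)=0.$$
Your plan therefore needs two amendments: (i) compute the integrated symbol rather than argue that it vanishes, and (ii) invoke traciality together with $[T,k]=0$ to conclude. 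Everything preceding that step --- the factorisation of $D_k^{-3}$ into flat parametrices and powers of $k$, the observation that only $T\,\sigma_{-4}(D_k^{-3})$ contributes, and the $S^3$ moment formulas --- is consistent with the paper's route.
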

		\begin{proof}
			The computations show that the integral over the cosphere of the symbol
			of $D_{k}^{-3}$ of order $-4$ is:
			%
			%e5.8 #&#
			\begin{equation}
				\int \limits _{||\xi ||=1}\sigma _{-4}(D_{k}^{-3}) d\xi = \frac{i}{2}
				\gamma ^{a} \biggl( k^{-2} \bigl[ k^{-2} , \{k^{-1}, \delta _{a}k \}
				\bigr] k^{-2} \biggr).
				\label{eq5.8}
			\end{equation}
			It is easy to see that the trace of the above expression vanishes also
			if multiplied by any element of zero-order. This is so because any element
			of the algebra of operators of zero order $T$, contains only products of
			$\gamma $, powers of $k$ and elements from ${\mathcal A}$, and thus it
			commutes with $k$. Therefore taking the trace over the algebra gives
			\begin{equation*}
				\mathcal{W}\, (T D_{k}^{-3}) = 0.
				\qedhere \end{equation*}
		\end{proof}
		Next, we explicitly compute the metric and Einstein tensor functional for
		$v,w$.
		%
		%p5.12 #&#
		\begin{prop}
			\label{prop5.12}
			The metric and the Einstein functionals for the conformally rescaled spectral
			triple on the noncommutative 4-torus are, respectively,
			%
			%e5.9 #&#
			\begin{equation}
				\label{4}
				\begin{aligned}
					\mathcal{g}_{D_{k}}(v,w) =& \, \tau \left ( W^{a} V^{b} k^{-4}
					\right ),
					\\
					\mathcal{G}_{D_{k}}(v,w) =& \, \tau \left ( V^{a} W^{b} \biggl(
					\frac{1}{3} k^{-4} (\delta _{a} k) k^{2} (\delta _{b} k) +
					\frac{2}{3} k^{-3} (\delta _{a} k) k^{1} (\delta _{b} k) + k^{-2} (
					\delta _{a} k) (\delta _{b} k) \right .
					\\
					& + \frac{2}{3} k^{-1} (\delta _{a} k) k^{-1} (\delta _{b} k) -
					\frac{4}{3} k (\delta _{a} k) k^{-3} (\delta _{b} k) - \frac{2}{3} k^{2}
					(\delta _{a} k) k^{-4} (\delta _{b} k)
					\\
					& + \frac{2}{3} k^{-1} (\delta _{a}\delta _{b} k) + \delta _{ab}
					\left ( \frac{1}{3} k^{-1} (\delta _{c} k) k^{-1} (\delta _{c} k) +
					\frac{1}{3} k^{2} (\delta _{c} k) k^{-4} (\delta _{c} k) \right .
					\\
					& \left . + \frac{2}{3} k^{1} (\delta _{c} k) k^{-3} (\delta _{c} k) -
					\frac{2}{3} k^{-1} (\Delta k) \right ) \biggr).
				\end{aligned}
				%
				%%LEAP%%%\label{eq5.9}
			\end{equation}
		\end{prop}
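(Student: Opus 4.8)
The plan is to evaluate both functionals directly from Definition~\ref{Dwres}, computing the noncommutative Wodzicki residue as the trace of the integral over the cosphere $\|\xi\|=1$ of the symbol of order $-n=-4$, exactly as was done for $\IT^2_\theta$ and for the conformally rescaled Laplacian in Section~\ref{512}. The first ingredient is the full symbol of $D_k=kDk$ with $D=\gamma^a\delta_a$: writing $D_k=k\gamma^a k\,\delta_a + k\gamma^a(\delta_a k)$, its principal part is $i\,k\gamma^a k\,\xi_a$ and its order-zero part is $k\gamma^a(\delta_a k)$. From the square $D_k^2$ one then builds the symbols of $|D_k|^{-2}=D_k^{-2}$ and $|D_k|^{-4}=D_k^{-4}$ by the noncommutative parametrix expansion \eqref{SymPinverse} and \eqref{commutative_operator_powers}. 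Throughout I use that $k\in\cA^o$ commutes with $V^a,W^b\in\cA$ and with the gamma matrices, while $\delta_a(k)\in\cA^o$ does \emph{not} commute with $k$; this non-commutativity is precisely the origin of the many inequivalent $k$-orderings in the final formula, and it makes $\tau$ factorise so that the $V^aW^b$ always sit outside the $k$-dependent string.

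The metric functional is the easy part: since $vw=k^4V^aW^b\gamma^a\gamma^b$ is an operator of order zero, only the principal symbol of $|D_k|^{-4}$ contributes to order $-4$. Computing the principal symbol of $D_k^2$, inverting it, integrating $\|\xi\|^{-4}$ over $S^3$ (with $\int_{S^3}1=v_3=2\pi^2$), tracing over $\gamma$ via $\tr(\gamma^a\gamma^b)=4\delta_{ab}$ and then applying $\tau$ gives $\tau(W^aV^bk^{-4})$.

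For the Einstein functional I would split the anticommutator exactly as for the classical one-form functional, writing $\{D_k,w\}D_k=D_kwD_k+wD_k^2$ and using $D_k^2|D_k|^{-4}=|D_k|^{-2}$, so that
\[
\mathcal{G}_{D_k}(v,w)=\underbrace{\wres\bigl(vD_kwD_k\,|D_k|^{-4}\bigr)}_{\mathcal{G}_1}+\underbrace{\wres\bigl(vw\,|D_k|^{-2}\bigr)}_{\mathcal{G}_2}.
\]
In $\mathcal{G}_2$ the factor $vw$ is of order zero, so I need the symbol of $|D_k|^{-2}$ down to order $-4$, that is two orders below the leading one; in $\mathcal{G}_1$ the operator $vD_kwD_k$ is of order two, so composing its symbol with that of $|D_k|^{-4}$ and retaining total order $-4$ again amounts to descending two orders. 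In both pieces I would collect the order-$(-4)$ symbol, integrate over $S^3$ with the standard moments $\int_{S^3}\xi_a\xi_b=\tfrac{v_3}{4}\delta_{ab}$ and $\int_{S^3}\xi_a\xi_b\xi_c\xi_d=\tfrac{v_3}{24}(\delta_{ab}\delta_{cd}+\delta_{ac}\delta_{bd}+\delta_{ad}\delta_{bc})$, take the $\gamma$-trace (with $\tr(\gamma^a\gamma^b\gamma^c\gamma^d)=4(\delta_{ab}\delta_{cd}-\delta_{ac}\delta_{bd}+\delta_{ad}\delta_{bc})$), and finally apply $\tau$, using cyclicity to reach the stated combination. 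The spectral closedness established for this triple in the preceding Lemma guarantees that the two pieces of the split are consistent and that purely order-zero contributions multiplied by $D_k^{-3}$ drop out, eliminating a large family of otherwise competing terms.

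The main obstacle is the bookkeeping of the order-$(-4)$ symbols in $\mathcal{G}_1$ and $\mathcal{G}_2$. Because $k$ commutes with $V^a,W^b$ and the gamma matrices but $\delta_a(k)$ does not commute with $k$, each contribution carries a definite ordering of the factors $k^{\pm\ell}$, $(\delta_a k)$ and $(\delta_a\delta_b k)$, and these orderings survive $\tau$ only up to cyclic rotation; this is what produces the distinct terms such as $k^{-4}(\delta_a k)k^{2}(\delta_b k)$ and $k^{-3}(\delta_a k)k(\delta_b k)$, with the coefficients $\tfrac13,\tfrac23,\tfrac43$ arising from the interplay of the parametrix-expansion weights with the $S^3$ moment integrals. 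I expect the cleanest route is to organise the calculation by grouping contributions according to the total power of $k$ and the positions of the $\delta k$ insertions, and to check the outcome against the commutative limit, where the density must reduce to the classical dual Einstein density $\tfrac{2\pi^2}{6}\sqrt{g}\,G_{ab}V^aW^b$ exactly as in Section~\ref{512} and Theorem~\ref{einsteinthm}.
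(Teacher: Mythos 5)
Your plan coincides with the paper's own proof, which is recorded only as ``explicit computation'': split $\{D_k,w\}D_k$ into $D_k w D_k + wD_k^2$, expand the symbols of $D_k^{-2}$ and $D_k^{-4}$ two orders below the leading one via the parametrix formulas, extract the order $-4$ symbol, integrate over $S^3$ with the standard moments, take the Clifford trace, and use cyclicity of $\tau$ to reach the stated orderings of $k^{\pm\ell}$ and $\delta k$. One small caution: for a one-form $w$ the commutator $[D_k,w]$ is a \emph{first}-order operator, so spectral closedness cannot be invoked to discard the commutator part of $\mathcal{G}_1$ --- it only kills genuinely zero-order terms (products of forms and algebra elements), as in the bimodule-linearity argument; your actual computation must therefore keep all order $-4$ contributions of $vD_kwD_kD_k^{-4}$.
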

		We skip the proof which consists of a straightforward though tedious computation
		noting only that to get \reftext{\eqref{4}} we used the cyclicity of the trace.
		
		%r5.13 #&#
		\begin{rem}
			\label{rem5.13}
			The above expression has a well-defined commutative limit,
			%
			%e5.10 #&#
			\begin{equation}
				\begin{aligned}
					\mathcal{G}_{D_{k}}(v,w) =& \frac{1}{\text{vol}(\mathbb{T}^{4})}
					\int \limits _{\mathbb{T}^{4}} V^{a} W^{b} \left ( \left (
					\frac{2}{3} k^{-2} k_{a} k_{b} + \frac{2}{3} k^{-1} k_{ab} \right )
					\right .
					\\
					& \left . \qquad + \left ( \frac{4}{3} \delta _{ab} k^{-2 } k_{c} k_{c}
					- \frac{2}{3} \delta _{ab} k^{-1 } k_{cc} \right ) \right )\xch{,}{.}
				\end{aligned}
				\label{eq5.10}
			\end{equation}
			which recovers the classical formula for the Einstein tensor for a conformally
			deformed metric on the 4-torus with the metric
			$g_{ab}= k^{-4} \delta _{ab}$,
			%
			%e5.11 #&#
			\begin{equation}
				G_{ab} = 4 \left ( k^{-2} k_{a} k_{b} + k^{-1} k_{ab} \right ) + 8
				\delta _{ab} k^{-2 } k_{c} k_{c} - 4 \delta _{ab} k^{-1 } k_{cc}.
				\label{eq5.11}
			\end{equation}
			Note that the density of the functional involving the one-forms uses the
			contravariant Einstein tensor and the volume form, and scales with
			$k$ like $ k^{4} k^{4} \sqrt{k^{-16}}$ so the density in 4 dimensions scales
			exactly like the covariant Einstein tensor. The scaling factor comes from
			the convention that $\tau $ is normalised with $\tau (1)=1$.
		\end{rem}
	\end{Example}
	%s5.2.2 #&#
	\subsubsection{The metric and Einstein functionals for a regular finitely summable spectral triple}
	\label{sec5.2.2}
	
	The second situation, which is different from the conformally rescaled
	geometries of noncommutative tori, is the case of regular, finitely summable
	spectral triples (for simplicity with simple dimension spectrum). In this
	case there is a PDO algebra and calculus of symbols as defined by Connes
	and Moscovici \cite{CoMo95} (see also \cite{Hi06} and \cite{Uuye11}) and
	there exists a tracial state (see \cite{CoMo95} Proposition II.1) denoted
	$\mathcal{W}$ (as before). The definition and methods of computation
	of the spectral metric and Einstein functionals are, however, quite involved
	and explicit expressions are feasible only in some special cases of highly
	symmetric Dirac operators (like the flat Dirac operator on noncommutative
	tori or fully symmetric Dirac operators on spheres). One can study though
	some functorial properties, for example the behaviour of the functionals
	under tensor product of regular spectral triples, of which a particular
	instance is a finite spectral triple as the second factor. When the first
	factor is the classical spectral triple (corresponding to spin-c manifolds)
	this corresponds to \textit{almost commutative geometries} which play a significant
	role as models for the geometry of physical interactions.
	
	Below we present an example of the metric and Einstein functionals for
	the tensor product of a regular (not necessarily commutative) spectral
	triple with a simplest non-trivial spectral triple on two points, expressing
	them in terms of the functionals on the components.
	
	\begin{Example}
		Functionals for the spectral triple on
		${\mathcal A}\otimes \mathbb{C}^{2}$.
	\end{Example}
	
	We assume that $({\mathcal A}, D, {\mathcal H})$ is an even spectral triple
	with grading $\gamma $ of dimension $n$ satisfying the assumptions described
	in section \ref{52}, which is spectrally closed, and we consider
	$({\mathcal A}\otimes \mathbb{C}^{2}, {\mathcal D}, {\mathcal H}
	\otimes \mathbb{C}^{2})$, where
	\begin{equation*}
		{\mathcal D} = \left (
		\begin{array}{c@{\quad}c}
			D & \gamma c
			\\
			\gamma c^{*} & D
		\end{array}
		\right ),
	\end{equation*}
	where $c \in \mathbb{C}$.
	
	It is easy to see that the bimodule of one-forms, associated to
	${\mathcal D}$ consists of the following operators,
	\begin{equation*}
		{\omega} = \left (
		\begin{array}{c@{\quad}c}
			w_{+} & \gamma c \phi _{+}
			\\
			\gamma c^{*} \phi _{-} & w_{-}
		\end{array}
		\right ),
	\end{equation*}
	where $w_{\pm }\in \Omega ^{1}_{D}({\mathcal A})$ and
	$ \phi _{\pm }\in {\mathcal A}$.
	
	We can now state
	%
	%p5.14 #&#
	\begin{prop}
		\label{prop5.14}
		Let $\omega , \omega '$ be two one-forms in the spectral triple
		$({\mathcal A}\otimes \mathbb{C}^{2}, {\mathcal D}, {\mathcal H}
		\otimes \mathbb{C}^{2})$. Then,
		\begin{equation*}
			{\mathcal g}_{\mathcal D}(\omega , \omega ') = {\mathcal g}_{D}(w_{+},w_{+}')
			+ {\mathcal g}_{D}(w_{-},w_{-}') + cc^{*} {\mathcal v} \left (\phi _{+}
			\phi _{-}' + \phi _{-} \phi _{+}'\right ),
		\end{equation*}
		and
		\begin{equation*}
			\begin{aligned}
				{\mathcal G}_{\mathcal D}(\omega , \omega ') &= {\mathcal G}_{D}(w_{+},w_{+}')
				+ {\mathcal G}_{D}(w_{-},w_{-}'),
				\\
				& \; +cc^{*} {\mathcal g}_{D}(w_{+}-w_{-},w_{+}'-w_{-}') -
				\frac{n}{2} cc^{*} \bigl( {\mathcal g}_{D}(w_{+},w_{+}') + {
					\mathcal g}_{D}(w_{-},w_{-}') \bigr)
				\\
				& \; +cc^{*} \left ( {\mathcal g}_{D}(w_{+}, d\phi _{+}') - {
					\mathcal g}_{D}(d\phi _{+}, w_{-}') +{\mathcal g}_{D}(w_{-}, d\phi _{-}')
				- {\mathcal g}_{D}(d\phi _{-}, w_{+}') \right )
				\\
				& + (cc^{*})^{2} {\mathcal v}\bigl( (\phi _{+} + \phi _{-})(\phi _{+}'
				+ \phi _{-}') \bigr),
			\end{aligned}
		\end{equation*}
		where $d\phi =[D,\phi ]$  for $ \phi \in {\mathcal A}$.
	\end{prop}
	We skip the computational proof mentioning only that the condition of being
	spectrally closed does not need to be preserved in the tensor product with
	a finite spectral triple.
	
	%s6 #&#
	\section{Final remarks}
	\label{sec6}
	
	The concept that geometric objects like tensors (metric, torsion and curvature
	tensors) can be expressed using spectral methods provides an invaluable
	possibility to study them globally both for the manifolds as well as for
	various extensions of geometries like noncommutative geometry.
	
	First of all, the metric functionals we constructed should be compared
	with various other concepts for the metric tensor proposed in the noncommutative
	realm on the algebraic level. Furthermore, possible relations with the
	notion of a distance (between states) and quantum metric spaces should
	be examined. The possibility of studying spectral functionals linked to
	connections leads to the possibility of defining an abstract notion of
	torsion and torsion-free connection. This would provide a natural contact
	with the various concepts of linear connections and possibly then with
	such notions as the Levi-Civita connection in the noncommutative case,
	for example \cite{fgr99,Ro13,BM20,BGJ21a,BGJ21b}.
	
	The newly introduced Einstein functional can be further computed for a
	variety of spectral triples on interesting algebras. This includes, in
	particular, more general spectral triples for the noncommutative two-tori
	(cf. \cite{DaSi13}) for which we expect the spectral Einstein functional to vanish. This can be further generalised as
	%
	%c6.1 #&#
	\begin{conj}
		A regular spectral triple of dimension 2 has a vanishing Einstein
		spectral functional.
	\end{conj}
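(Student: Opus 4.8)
\medskip
\noindent\emph{Proof proposal.} The plan is to reduce the statement, via the bimodule structure of one-forms and the trace property of $\wres$, to a single symbol computation that mirrors the classical one for differential forms (Theorem 4.1), and then to isolate the feature of dimension two that forces the relevant residue to vanish. Throughout I take \emph{suitably regular} to include at least regularity in the sense of the Connes--Moscovici calculus together with spectral closedness, both of which hold in the manifold case and in the conformally rescaled noncommutative $2$-torus examples where the result was already established. First I would specialise the definition to $n=2$, where $|D|^{-2}=D^{-2}$ and hence $D|D|^{-2}=D^{-1}$, so that $\mathcal{G}_D(v,w)=\wres\bigl(v\{D,w\}D^{-1}\bigr)$. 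Expanding $\{D,w\}=Dw+wD$ gives the same split as in the proof of Theorem 4.1, namely $\mathcal{G}_D=\mathcal{G}_1+\mathcal{G}_2$ with $\mathcal{G}_1(v,w)=\wres(vDwD^{-1})$ and $\mathcal{G}_2(v,w)=\wres(vw)$.

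Since $\Omega^1_D$ is generated as an $\cA$-bimodule by $[D,\cA]$, and since spectral closedness yields the localisation $\mathcal{G}_D(vc,w)=\mathcal{G}_D(v,cw)$ for $c\in\cA$, it suffices to test $\mathcal{G}_D$ on generators $v=[D,a]$ and $w=[D,b]$ with $a,b\in\cA$. For such a $w$ one has the key algebraic identity
$$\{D,[D,b]\}=D[D,b]+[D,b]D=[D^2,b]=[|D|^2,b],$$
so that the functional collapses to $\mathcal{G}_D\bigl([D,a],[D,b]\bigr)=\wres\bigl([D,a]\,[|D|^2,b]\,D^{-1}\bigr)$. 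This already exhibits $[|D|^2,b]$ as the abstract Laplace-type object whose leading symbol is the noncommutative analogue of $g^{cd}(\partial_c b)\xi_d$, tying the Einstein functional directly back to the metric functional $\mathcal{g}_D$.

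The core step is to expand the three factors in the generalised symbol calculus, retain the part of order $-2$, and integrate over the one-dimensional cosphere. The decisive two-dimensional input is twofold. For $\mathcal{G}_2=\wres(vw)$ the classical computation produces exactly the scalar-curvature term carrying the prefactor $\tfrac{n-2}{12}$, which vanishes identically at $n=2$; abstractly I would argue that $\wres(vw)$ vanishes because $vw$ is a product of one-forms, hence a zero-order element of the algebra generated by $\cA$ and $[D,\cA]$, whose order-$(-2)$ symbol contribution is killed by the regularity hypothesis. For $\mathcal{G}_1$ the remaining contribution reduces to the density of the would-be Einstein tensor: in two dimensions the cosphere is a circle and the only available rank-two contraction of the curvature symbol is pure trace, which is precisely the identity $\mathrm{Ric}=\tfrac12 R\,g$ that makes the Einstein tensor vanish. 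Assembling the cosphere integrals should then cancel the $\mathrm{Ric}$-type contribution against the $R\,g$-type one.

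The hard part is precisely this last cancellation in the genuinely noncommutative setting, where there is no Ricci tensor, no normal coordinates, and no curvature identity to invoke: the geometric reason for the vanishing must be replaced by an operator-theoretic one phrased entirely in terms of $\wres$, $D$, and the one-form bimodule. I expect the main obstacle to be identifying and justifying the exact regularity hypotheses that force the order-$(-2)$ cosphere integral of $[D,a]\,[|D|^2,b]\,D^{-1}$ to vanish --- most plausibly a condition controlling the iterated commutator $[D,[D,a]]$ (a torsion-type regularity constraint) together with the normalisation of the two-dimensional Clifford structure --- and in verifying that these hypotheses hold on a class of triples substantially larger than the conformally rescaled $2$-torus already treated. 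This is the reason the statement is posed here only as a conjecture.
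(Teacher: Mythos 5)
The statement you are addressing is posed in the paper as a \emph{conjecture}: the authors offer no proof, and explicitly say they expect regularity alone not to suffice. Your proposal, read as a proof, has a genuine gap --- indeed the gap coincides with the entire substance of the conjecture, as you yourself concede in your final paragraph. The reductions you perform are sound and consistent with the evidence the paper does supply: for $n=2$ one has $D|D|^{-2}=D^{-1}$; the split $\mathcal{G}_D=\mathcal{G}_1+\mathcal{G}_2$ mirrors the proof of the classical Theorem 4.1, where $\mathcal{G}_2$ carries the prefactor $\tfrac{n-2}{12}$ and dies at $n=2$; the identity $\{D,[D,b]\}=[D^2,b]$ is correct; and spectral closedness (Theorem 5.9) legitimises testing on generators $[D,a]$, $[D,b]$. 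This is essentially the route the paper itself takes in the two cases it actually proves (the classical $2$-manifold and the conformally rescaled noncommutative $2$-torus, Propositions 5.2 and 5.10). But the decisive step --- that the order-$(-2)$ contribution of $[D,a]\,[D^2,b]\,D^{-1}$ vanishes for a general suitably regular $2$-dimensional triple --- is asserted by appeal to ``the only available rank-two contraction of the curvature symbol is pure trace,'' which is the identity $\mathrm{Ric}=\tfrac12 Rg$. That identity is a theorem of $2$-dimensional Riemannian geometry; in an abstract spectral triple there is no cotangent bundle, no cosphere to integrate over, no curvature symbol, and no Bianchi-type identity, so the sentence has no content there. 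The Connes--Moscovici calculus provides a grading of operators by order and a residue trace, but nothing that plays the role of the pointwise tensor algebra in which your cancellation takes place.

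Two smaller points. First, your argument that $\mathcal{G}_2(v,w)=\wres(vw)=0$ ``by the regularity hypothesis'' overreaches: the paper's standing assumption is only that $\wres$ vanishes on $T|D|^{-k}$ for $k>n$ and $T$ of order zero; it does not assert $\wres(T)=0$ for a bare zero-order $T$, and for a general $n$-summable triple the residue of a zero-order operator is governed by whether $0$ lies in the dimension spectrum. Second, the reduction to generators requires not just $\mathcal{G}_D(v\mathcal{b},w)=\mathcal{G}_D(v,\mathcal{b}w)$ but also additivity over the decomposition $w=\sum a_i[D,b_i]$ compatibly with the anticommutator $\{D,\cdot\}$, which produces extra terms $[D,a_i][D,b_i]$ of order zero that again need the vanishing-residue property above. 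None of this invalidates your plan as a research programme --- it is a reasonable one, and it correctly localises the difficulty --- but it is not a proof, and the paper does not contain one either.
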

	Even if the regularity assumption should be suitably supplemented
	such a result will show the robustness of the noncommutative generalisation of manifolds.
	
	Another direction is to follow a spectral definition of a noncommutative
	Einstein manifold (or, being more precise, an Einstein spectral triple).
	%
	%d6.2 #&#
	\begin{defn}
		\label{defn6.2}
		A spectral triple is called an Einstein spectral triple if the spectral
		Einstein functional is proportional to the metric functional.
	\end{defn}
	The study of these objects in both the context of almost commutative geometries
	and applications in mathematical physics can be very interesting. We also
	hope that the proposed functionals may allow for a global spectral view
	on the Ricci and curvature tensors themselves and rephrase in the spectral
	language the notion of flat manifolds. This, together with the Einstein
	tensor playing a significant role in physics, can prove helpful in applications
	to quantum field theory and various incarnations of quantum gravity. A
	possibility of having a direct approach not only to the action functional
	but also to a version of equations of motion is another appealing direction
	of studies that could possibly be linked to a more general variational
	formula.
	
	Yet another direction to investigate might be connected with orbifolds
	and other singular generalisations of a Riemannian manifold, to see for
	example how our spectral functionals apprehend the singularities. Also
	their extension to the case of a twisted algebra of pseudodifferential
	operators as in \cite{MaYu17} is an appealing task.
	
	Another interesting problem remains the equivalence of the algebra of pseudodifferential
	operators (\cite{Co80} and \cite{CoMo95}) for the noncommutative tori with
	various Dirac operators. Comparison of the functionals discussed in this
	work can shed some light on this open problem.
	
	%a6 #&#
	
	%\section*{CRediT authorship contribution statement}
	
	%DOCI
	%
	%\begin{conflict} % None declared.
	%\end{conflict}
	
	%DataAvailability
	%
	%\begin{dataavailability}[title=Data availability]
	%\end{dataavailability}
	
	%\begin{acks}
	%\end{acks}
	
%	\begin{appm}
		%\def\the...{}
		%\reset{}{}
		%\appendix{}
		%\appendix*{}
		\appendix \section{Algebra of symbols of pseudodifferential operators}
		\label{appA}
		
		Suppose that $P$ and $Q$ are two pseudodifferential operators with symbols,
		%
		%eA.1 #&#
		\begin{equation}
			\sigma (P)(x,\xi )=\sum \limits _{\alpha} \sigma (P)_{\alpha}(x)\xi ^{
				\alpha},
			\hspace{20pt}
			\sigma (Q)(x,\xi )=\sum \limits _{\beta} \sigma (Q)_{\beta}(x)\xi ^{
				\beta},
			\label{eqA.1}
		\end{equation}
		respectively, where $\alpha , \beta $ are multiindices. The composition
		rule for the symbols of their product takes the form \cite{Gi84}.
		%
		%eA.2 #&#
		\begin{equation}
			\sigma (PQ)(x,\xi )=\sum \limits _{\beta}
			\frac{(-i)^{|\beta |}}{|\beta |!}\partial ^{\xi}_{\beta }\sigma (P)(x,
			\xi )\partial _{\beta }\sigma (Q)(x,\xi ),
			\label{composition}
			%%LEAP%%%\label{eqA.2}
		\end{equation}
		where $\partial _{a}^{\xi}$ denotes the partial derivative with respect
		to the coordinate of the cotangent bundle.
		
		We start with computation of the three leading coefficients of the symbols
		of $P^{-1}$ (we assume that the kernel of $P$ is finite dimensional and
		can be neglected in the following) for a second-order differential operator
		$P$, with the symbol expansion,
		%
		%eA.3 #&#
		\begin{equation}
			\sigma (P)(x,\xi )=\mathfrak a_{2}+\mathfrak a_{1}+\mathfrak a_{0}.
			\label{eqA.3}
		\end{equation}
		The inverse is a pseudodifferential operator $P^{-1}$, with a symbol of
		the form,
		%
		%eA.4 #&#
		\begin{equation}
			\sigma (P^{-1})(x,\xi )=\mathfrak b_{2}+\mathfrak b_{3}+\mathfrak b_{4}+...,
			\label{eqA.4}
		\end{equation}
		where $\mathfrak b_{k}$ is homogeneous in $\xi $ of order $-k$. Inserting
		these expressions into \reftext{\eqref{composition}} and taking homogeneous parts
		of order $0,-1$ and $-2$ we get the following set of equations:
		%
		%eA.5 #&#
		\begin{equation}
			\begin{aligned}
				&\mathfrak a_{2} \mathfrak b_{2}=1,
				\\
				&\mathfrak a_{1} \mathfrak b_{2}+\mathfrak a_{2} \mathfrak b_{3} -i
				\partial _{a}^{\xi}(\mathfrak a_{2})\partial _{a}(\mathfrak b_{2})=0,
				\\
				&\mathfrak a_{2}\mathfrak b_{4}+\mathfrak a_{1} \mathfrak b_{3} +
				\mathfrak a_{0} \mathfrak b_{2} -i\partial _{a}^{\xi}(\mathfrak a_{1})
				\partial _{a}(\mathfrak b_{2})
				\\
				& \qquad \qquad -i\partial _{a}^{\xi}(\mathfrak a_{2}) \partial _{a}(
				\mathfrak b_{3}) -\frac{1}{2}\partial _{a}^{\xi}\partial _{b}^{\xi}(
				\mathfrak a_{2}) \partial _{a}\partial _{b}(\mathfrak b_{2})=0,
			\end{aligned}
			\label{eqA.5}
		\end{equation}
		which we solve recursively, obtaining
		%
		%eA.6 #&#
		\begin{equation}
			\label{SymPinverse}
			\begin{aligned}
				&\mathfrak b_{2}=\mathfrak a_{2}^{-1},
				\\
				& \mathfrak b_{3}= -\mathfrak b_{2} \left ( \mathfrak a_{1}
				\mathfrak b_{2} -i\partial _{a}^{\xi}(\mathfrak a_{2})\partial _{a}(
				\mathfrak b_{2}) \right ),
				\\
				&\mathfrak b_{4}= -\mathfrak b_{2} \left ( \mathfrak a_{1}\mathfrak b_{3}
				+ \mathfrak a_{0} \mathfrak b_{2} - i\partial _{a}^{\xi}(\mathfrak a_{1})
				\partial _{a}(\mathfrak b_{2}) \right .
				\\
				& \qquad \qquad \left . -i\partial _{a}^{\xi}(\mathfrak a_{2})
				\partial _{a}(\mathfrak b_{3}) -\frac{1}{2}\partial _{a}^{\xi}
				\partial _{b}^{\xi}(\mathfrak a_{2})\partial _{a}\partial _{b}(
				\mathfrak b_{2}) \right ).
			\end{aligned}
			%
			%%LEAP%%%\label{eqA.6}
		\end{equation}
		
		Next, we show a technical lemma that allows us to compute the symbol of
		the higher inverse power of the Laplace operator. This is a straightforward
		application of the iterated formula \reftext{\eqref{composition}} and we include
		it only for completeness. To shorten the notation, we shall denote the
		derivatives with respect to the coordinates on the manifold by
		$\delta $ and use still $\partial $ for the partial derivatives with respect
		to the coordinates on the cotangent bundle. Further, we shorten the notation
		of the derivatives applied to a certain element of the product, which would
		be valid for all derivatives (both $\partial , \delta $), for example:
		\begin{equation*}
			\partial _{a}^{[k]} \left ( y_{1} y_{2} \cdots y_{n} \right ) = y_{1} y_{2}
			\cdots \partial _{a}(y_{k}) \cdots y_{n}.
		\end{equation*}
		
		\def\thethm{A.\arabic{thm}}
		\setcounter{thm}{0}
		%lA.1 #&#
		\begin{lem}
			%%LEAP%%%\label{lemA.1}
			\label{LA1}
			Given the first three leading symbols of an operator $P$ (with the principal
			symbol being of order $-k$),
			\begin{equation*}
				\sigma (P) = \mathfrak p_{k} + \mathfrak p_{k+1}+ \mathfrak p_{k+2}+
				\cdots ,
			\end{equation*}
			we can express the first three leading symbols of $R=P^{l}$,
			\begin{equation*}
				\sigma (R) = \mathfrak r_{lk} + \mathfrak r_{lk+1} + \mathfrak r_{lk+2}
				+ \cdots ,
			\end{equation*}
			in terms of $\mathfrak p_{k}$, $\mathfrak p_{k+1}$,
			$\mathfrak p_{k+2}$, as:
			%
			%eA.7 #&#
			\begin{equation}
				\begin{aligned}
					&\mathfrak r_{lk} = ({\mathfrak p}_k )^{l},
					\\
					& \mathfrak r_{lk+1} = \sum \limits _{j = 1}^{l} (\mathfrak p_{k})^{j-1}
					\mathfrak p_{k+1} (\mathfrak p_{k})^{l-j} -i \sum \limits _{1 \leq j <
						w \leq l} {\partial}_{a}^{[j]} {\delta}_{a}^{[w]} (\mathfrak p_{k})^{l},
					\\
					& \mathfrak r_{lk+2} = \sum \limits _{j = 1}^{l} (\mathfrak p_{k})^{j-1}
					\mathfrak p_{k+2} (\mathfrak p_{k})^{l-j}
					\\
					& \qquad \qquad + \sum \limits _{1 \leq j < s \leq l} (\mathfrak p_{k})^{j-1}
					\mathfrak p_{k+1} (\mathfrak p_{k})^{s-j-1} \mathfrak p_{k+1} (
					\mathfrak x)^{l-s},
					\\
					& \qquad \qquad - i \sum \limits _{s=1}^{l} \sum \limits _{1 \leq j < p
						\leq l} {\partial}_{a}^{[j]} {\delta}_{a}^{[p]} (\mathfrak p_{k})^{s-1}
					\mathfrak p_{k+1} (\mathfrak p_{k})^{l-s} ,
					\\
					& \qquad \qquad -\frac{1}{2}\sum \limits _{1 \leq j < p \leq l} \,\,
					\sum \limits _{1 \leq r < s \leq l} {\partial}_{b}^{[r]} {\delta}_{b}^{[s]}
					{\partial}_{a}^{[j]} {\delta}_{a}^{[p]} (\mathfrak p_{k})^{l},
				\end{aligned}
				\label{operator_powers}
				%%LEAP%%%\label{eqA.7}
			\end{equation}
		\end{lem}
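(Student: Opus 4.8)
The plan is to prove the three formulas by induction on the power $l$, using the two-operator composition rule \eqref{composition} at each step. The base case $l=1$ is the tautology $\sigma(P)=\mathfrak p_k+\mathfrak p_{k+1}+\mathfrak p_{k+2}+\cdots$. For the inductive step I would write $P^{l}=P^{l-1}\cdot P$, insert the symbol of $P^{l-1}$ (assumed known to the first three orders) together with that of $P$ into \eqref{composition}, and retain only the terms contributing to orders $-lk,\,-lk-1,\,-lk-2$. The right bookkeeping device is the \emph{excess}, the amount by which a term falls below the leading order $-lk$: each replacement of a leading symbol $\mathfrak p_k$ by a subleading $\mathfrak p_{k+s}$ costs excess $s$, and each derivative pairing $\partial^{\xi}_\beta(\,\cdot\,)\,\partial_\beta(\,\cdot\,)$ in \eqref{composition} costs excess $|\beta|$, since $\partial^\xi_\beta$ lowers the $\xi$-degree by $|\beta|$. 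Only total excess $0,1,2$ survives.

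At excess $0$ no derivatives and no subleading symbols are available, giving $\mathfrak r_{lk}=(\mathfrak p_k)^l$ immediately. At excess $1$ exactly one unit is spent, either by promoting a single factor to $\mathfrak p_{k+1}$, which after recording the position of the promotion yields $\sum_{j}(\mathfrak p_k)^{j-1}\mathfrak p_{k+1}(\mathfrak p_k)^{l-j}$, or by a single $|\beta|=1$ pairing between an earlier $\xi$-derivative and a strictly later spatial derivative, yielding $-i\sum_{j<w}\partial^{[j]}_a\delta^{[w]}_a(\mathfrak p_k)^l$. The constraint $j<w$ is automatic: in $P^{l-1}\cdot P$ the spatial derivatives of \eqref{composition} act only on the rightmost factor, so by Leibniz the paired spatial index always lands strictly to the right of the $\xi$-index, and the induction propagates this ordering.

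The substance of the proof is the excess-$2$ symbol $\mathfrak r_{lk+2}$, where the same mechanisms interact. Two of the four sums arise as above: promoting one factor to $\mathfrak p_{k+2}$, and promoting two distinct factors to $\mathfrak p_{k+1}$ (forced into the ordering $j<s$). The third sum combines one promotion to $\mathfrak p_{k+1}$ with one $|\beta|=1$ pairing, the positions of the two events ranging independently. The real obstacle is the last sum, $-\tfrac12\sum_{j<p}\sum_{r<s}\partial^{[r]}_b\delta^{[s]}_b\partial^{[j]}_a\delta^{[p]}_a(\mathfrak p_k)^l$: in the inductive step its content arrives from three disjoint sources, namely the genuine $|\beta|=2$ term $-\tfrac12\,\partial^\xi_a\partial^\xi_b(\mathfrak p_k)^{l-1}\,\delta_a\delta_b\mathfrak p_k$ of \eqref{composition}, the product of the inherited excess-$1$ pairing of $P^{l-1}$ with a new $|\beta|=1$ pairing against the last factor, and the pairing inherited wholesale inside $\mathfrak r_{(l-1)k+2}$. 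I would verify that these reassemble into the single symmetric double sum by checking the coefficient on each diagram: the diagonal $(j,p)=(r,s)$, with both $\xi$-derivatives on one factor and both spatial derivatives on another, reproduces the $\tfrac12$ of the $|\beta|=2$ term, while each off-diagonal unordered pair of contractions is counted twice by the ordered double sum, cancelling the $\tfrac12$ and matching the $(-i)(-i)=-1$ produced by two separate $|\beta|=1$ steps. Confirming that the Leibniz distribution of the outer $\partial^\xi$ over an already-contracted product generates precisely the overlapping configurations ($t=j$ or $t=w$) with the correct multiplicity is the one place where genuine care is required; everything else is a direct, if lengthy, tracking of positions.
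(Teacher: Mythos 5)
Your proposal is correct and follows essentially the same route as the paper: induction on $l$, feeding $\sigma(P^{l-1})$ and $\sigma(P)$ into the two-operator composition rule \eqref{composition}, keeping only the three relevant homogeneity orders, and using the Leibniz rule to distribute the derivative pairings over positions so that the ordered sums $j<w$ reassemble (the paper writes $P^l=P\cdot P^{l-1}$ rather than $P^{l-1}\cdot P$, which only flips which side the new pairing attaches to). Your "excess'' bookkeeping and the coefficient check on the $-\tfrac12$ double sum (diagonal from the $|\beta|=2$ term versus doubly-counted off-diagonal pairs) is exactly the content of the paper's verification of $\mathfrak r_{lk+2}(2)$, so the argument is sound as sketched.
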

		\begin{proof}
			We proceed by induction in $l$. The claim is obvious for $l=1$. Assume
			now that it holds for $l-1$, and consider $R = P^{l}=P P^{l-1}$. By \reftext{\eqref{composition}} we can see, that
			%
			%eA.8 #&#
			\begin{equation}
				\begin{aligned}
					\mathfrak r_{lk} &= \mathfrak p_{k} \mathfrak r_{(l-1)k},
					\\
					\mathfrak r_{lk+1} &=\mathfrak p_{k} \mathfrak r_{(l-1)k+1} +
					\mathfrak p_{k+1} \mathfrak r_{(l-1)k} -i \partial _{a} \mathfrak p_{k}
					\delta _{a}\mathfrak r_{(l-1)k},
					\\
					\mathfrak r_{lk+2} &=\mathfrak p_{k} \mathfrak r_{(l-1)k+2} +
					\mathfrak p_{k+1} \mathfrak r_{(l-1)k+1} +\mathfrak p_{k+2}
					\mathfrak r_{(l-1)k}
					\\
					& \quad -i\partial _{a}\mathfrak p_{k}\delta _{a}\mathfrak r_{(l-1)k+1}
					-i\partial _{a}\mathfrak p_{k+1} \delta _{a} \mathfrak r_{{l-1}k} -
					\frac{1}{2}\partial _{a}\partial _{b}\mathfrak p_{k}\delta _{a}
					\delta _{b}\mathfrak r_{(l-1)k}.
				\end{aligned}
				\label{ppl}
				%%LEAP%%%\label{eqA.8}
			\end{equation}
			The formula for $\mathfrak r_{lk}$ is obvious. Consider now the expression
			for $\mathfrak r_{lk+1}$ and assume that the formula holds for $l-1$, then
			\begin{align*}
				\mathfrak r_{lk+1} & = \mathfrak p_{k} \left ( \sum \limits _{j = 1}^{l-1}
				(\mathfrak p_{k})^{j-1} \mathfrak p_{k+1} (\mathfrak p_{k})^{l-1-j} - i
				\sum \limits _{1 \leq j < s \leq l-1} {\partial}_{a}^{[j]} {\delta}_{a}^{[s]}
				(\mathfrak p_{k})^{l-1} \right )
				\\
				& \qquad + \mathfrak p_{k+1} \mathfrak p_{k}^{l-1} -i \partial _{a}
				\mathfrak p_{k} \delta _{a}\mathfrak p_{k}^{l-1}
				\\
				& = \left ( \mathfrak p_{k} \sum \limits _{j = 1}^{l-1} (\mathfrak p_{k})^{j-1}
				\mathfrak p_{k+1} (\mathfrak p_{k})^{l-1-j} + \mathfrak p_{k+1}
				\mathfrak p_{k}^{l-1} \right )
				\\
				& \quad - i \left ( \mathfrak p_{k} \sum \limits _{1 \leq j < s \leq l-1}
				{\partial}_{a}^{[j]} {\delta}_{a}^{[s]} (\mathfrak p_{k})^{l-1} +
				\partial _{a} \mathfrak p_{k} \delta _{a}\mathfrak p_{k}^{l-1}
				\right )
				\\
				& = \sum \limits _{j = 1}^{l} (\mathfrak p_{k})^{j-1} \mathfrak p_{k+1}
				(\mathfrak p_{k})^{l-j} -i \sum \limits _{1 \leq j < s \leq l} {
					\partial}_{a}^{[j]} {\delta}_{a}^{[s]} (\mathfrak p_{k})^{l},
			\end{align*}
			where we have used only the reordering of terms and the Leibniz rule for
			$\delta _{a}$:
			\begin{equation*}
				\delta _{a} (\mathfrak p_{k})^{l-1} = \sum \limits _{s=1}^{l-1}
				\delta _{a}^{[s]} (\mathfrak p_{k}) l-1.
			\end{equation*}
			
			The formula for $\mathfrak r_{lk+2}$ can be proved in a similar
			way. We can split the product in \reftext{\eqref{ppl}} into the sum with no derivatives,
			only first-order derivatives, and finally second-order derivatives. We
			skip the proof for the first two parts and illustrate only the last part.
			The component of $\mathfrak r_{lk+2}$ with second-order derivatives (denoted
			$\mathfrak r_{lk+2}(2)$) will come from the following contributions.
			%
			%eA.9 #&#
			\begin{equation}
				\begin{aligned}
					\mathfrak r_{lk+2}(2) &=\mathfrak p_{k} \mathfrak r_{(l-1)k+2}(2) -i
					\partial _{a}\mathfrak p_{k}\delta _{a}\mathfrak r_{(l-1)k+1}(1) -
					\frac{1}{2}\partial _{a}\partial _{b}\mathfrak p_{k}\delta _{a}
					\delta _{b}\mathfrak r_{(l-1)k} = \cdots
				\end{aligned}
				\label{eqA.9}
			\end{equation}
			which, after assuming the validity for $l-1$ gives,
			%
			%eA.10 #&#
			\begin{equation}
				\begin{aligned}
					\qquad \qquad \cdots &=\mathfrak p_{k} \left ( -\frac{1}{2}\sum
					\limits _{1 \leq j < p \leq l} \,\, \sum \limits _{1 \leq r < s \leq n}
					{\partial}_{b}^{[r]} {\delta}_{b}^{[s]} {\partial}_{a}^{[j]} {\delta}_{a}^{[p]}
					(\mathfrak p_{k})^{l} \right )
					\\
					& - i \partial _{b}\mathfrak p_{k} \delta _{b} \left ( -i \sum
					\limits _{1 \leq j < w \leq l} {\partial}_{a}^{[j]} {\delta}_{a}^{[w]}
					(\mathfrak p_{k})^{l} \right )
					\\
					& - \frac{1}{2} \partial _{a}\partial _{b}\mathfrak p_{k}\delta _{a}
					\delta _{b} \mathfrak p_{k}^{l-1}.
				\end{aligned}
				\label{eqA.10}
			\end{equation}
			Again, using the Leibniz rule, we see that it is indeed the term from \reftext{\eqref{operator_powers}} with second-order derivatives split into the parts
			that first $\mathfrak p_{k}$ has no derivatives acted upon, one derivative,
			and two derivatives.
		\end{proof}
		%
		%cA.2 #&#
		\begin{coro}
			\label{corA.2}
			In the special case of scalar symbols (a sufficient condition is that symbol
			$\mathfrak p_{k}$ is scalar) we have (with the same notation as above),
			%
			%eA.11 #&#
			%
			\begin{equation}
				\begin{aligned}
					\mathfrak r_{lk} &= (\mathfrak p_{k})^{l}, \\
					\mathfrak r_{lk+1} &= l (\mathfrak p_{k})^{l-1}\mathfrak p_{k+1} -i
					\frac{l(l-1)}{2}(\mathfrak p_{k})^{l-2} \partial _{a}(\mathfrak p_{k})
					\delta _{a}(\mathfrak p_{k}), \\
					\mathfrak r_{lk+2} &= l (\mathfrak p_{k})^{l-1}\mathfrak p_{k+2} +
					\frac{l(l-1)}{2}(\mathfrak p_{k})^{l-2}(\mathfrak p_{k+1})^{2}
					\\
					& \qquad -i\frac{l(l-1)}{2}(\mathfrak p_{k})^{l-3} \biggl[
					\mathfrak p_{k} \Big(\partial _{a}(\mathfrak p_{k+1})\delta _{a}(
					\mathfrak p_{k}) +\partial _{a}(\mathfrak p_{k})\delta _{a}(
					\mathfrak p_{k+1})\Big)
					\\
					& \qquad \qquad \qquad +(l-2)\mathfrak p_{k+1}\partial _{a}(
					\mathfrak p_{k}) \delta _{a}(\mathfrak p_{k})\biggr]
					\\
					& \qquad -\frac{l(l-1)}{24}(\mathfrak p_{k})^{l-4} \biggl( 6(
					\mathfrak p_{k})^{2}\partial _{a}\partial _{b}(\mathfrak p_{k})
					\delta _{a}\delta _{b}(\mathfrak p_{k})
					\\
					& \qquad \qquad \qquad + 3(l-2)(l-3)\partial _{a}(\mathfrak p_{k})
					\partial _{b}(\mathfrak p_{k}) \delta _{a}(\mathfrak p_{k})\delta _{b}(
					\mathfrak p_{k})
					\\
					&\qquad \qquad \qquad +4(l-2)\mathfrak p_{k} \biggl[ \partial _{a}(
					\mathfrak p_{k})\partial _{b}(\mathfrak p_{k}) \delta _{a}\delta _{b}(
					\mathfrak p_{k}) \\
					&  \qquad \qquad \qquad
					+ \partial _{a}(\mathfrak p_{k})\partial _{b}\delta _{a}(
					\mathfrak p_{k})\delta _{b}(\mathfrak p_{k})
					+ \partial _{a}\partial _{b}(\mathfrak p_{k})
					\delta _{a}(\mathfrak p_{k}) \delta _{b}(\mathfrak p_{k}) \biggr]
					\biggr).\label{commutative_operator_powers}
				\end{aligned}
			\end{equation}
		\end{coro}

\end{document}